\documentclass[a4paper]{amsart}
\usepackage{amscd}
\usepackage{amsmath}
\usepackage{mathrsfs}
\usepackage{amssymb}
\usepackage{amsthm}
\usepackage{bbm}
\usepackage{stmaryrd}

\usepackage[T1]{fontenc}

\makeatletter
\@namedef{subjclassname@2020}{%
  \textup{2020} Mathematics Subject Classification}
\makeatother

\newif\ifpdf
\ifx\pdfoutput\undefined
   \pdffalse        
\else
   \pdfoutput=1     
   \pdftrue
\fi

\ifpdf
   \usepackage[pdftex]{graphicx}
   \pdfadjustspacing=1
   \pdfcompresslevel=9
\else
   \usepackage{graphicx}
\fi

\frenchspacing

\numberwithin{equation}{section} \swapnumbers

\newtheorem{satz}{Satz}[section]

\newtheorem{theorem}[satz]{Theorem}
\newtheorem{proposition}[satz]{Proposition}
\newtheorem{corollary}[satz]{Corollary}
\newtheorem{lemma}[satz]{Lemma}
\newtheorem{assumption}[satz]{Assumption}

\newtheorem{definition}[satz]{Definition}

\newtheorem{remark}[satz]{Remark}

\newtheorem{example}[satz]{Example}

\newcommand{\bbb}{\mathbb{B}}
\newcommand{\bbr}{\mathbb{R}}
\newcommand{\bbe}{\mathbb{E}}
\newcommand{\bbn}{\mathbb{N}}
\newcommand{\bbp}{\mathbb{P}}

\newcommand{\bbw}{\mathbb{W}}
\newcommand{\bbv}{\mathbb{V}}

\newcommand{\cala}{\mathscr{A}}
\newcommand{\calb}{\mathscr{B}}
\newcommand{\cald}{\mathscr{D}}
\newcommand{\cale}{\mathscr{E}}
\newcommand{\calf}{\mathscr{F}}
\newcommand{\calh}{\mathscr{H}}

\newcommand{\calp}{\mathscr{P}}

\newcommand{\ran}{{\rm ran}}

\newcommand{\Id}{{\rm Id}}

\newcommand{\sgn}{{\rm sgn}}

\newcommand{\la}{\langle}
\newcommand{\ra}{\rangle}

\newcommand{\bbI}{\mathbbm{1}}

\begin{document}

\title[The dual Yamada-Watanabe theorem for SPDEs]{The dual Yamada-Watanabe theorem for mild solutions to stochastic partial differential equations}
\author{Stefan Tappe}
\address{Albert Ludwig University of Freiburg, Department of Mathematical Stochastics, Ernst-Zermelo-Stra\ss{}e 1, D-79104 Freiburg, Germany}
\email{stefan.tappe@math.uni-freiburg.de}
\date{29 June, 2021}
\thanks{I am grateful to Mikhail Urusov for bringing my attention to this research topic, and for valuable discussions. I am also grateful to the referee for helpful remarks. Furthermore, I gratefully acknowledge financial support from the Deutsche Forschungsgemeinschaft (DFG, German Research Foundation) -- project number 444121509.}
\begin{abstract}
We provide the dual result of the Yamada-Watanabe theorem for mild solutions to semilinear stochastic partial differential equations with path-dependent coefficients. An essential tool is the so-called ``method of the moving frame'', which allows us to reduce the proof to infinite dimensional stochastic differential equations.
\end{abstract}
\keywords{Stochastic partial differential equation, martingale solution, mild solution, dual Yamada-Watanabe theorem, uniqueness in law, joint uniqueness in law, pathwise uniqueness}
\subjclass[2020]{60H15, 60H10, 60H05}

\maketitle

\section{Introduction}\label{sec-intro}

The Yamada-Watanabe theorem, which goes back to \cite{Yamada}, has been generalized into several directions, also for stochastic differential equations (SDEs) in infinite dimension. Such generalizations can be found, for example, in \cite{Prevot-Roeckner}, \cite{BLP}, \cite{Roeckner} (see also Appendix E in \cite{Liu-Roeckner}), \cite{Zhao}, \cite{Ondrejat} and \cite{Tappe-YW} as well as \cite{Kurtz-1} and the successive paper \cite{Kurtz-2}.

The dual result of the Yamada-Watanabe theorem has originally been presented by Cherny in \cite{Cherny}. This result has been generalized to SDEs driven by Poisson processes (see \cite{YHX}), to SDEs driven by L\'{e}vy processes (see \cite{Criens}), to stochastic evolution equations in the framework of the variational approach with path-dependent coefficients (see \cite{Qiao} and \cite{Rehmeier}) and to stochastic partial differential equations (SPDEs) in the framework of the semigroup approach with state-dependent coefficients in \cite{Ondrejat}. We also mention the recent preprint \cite{Criens-Ritter}, where this result has been generalized to analytically weak solutions of Banach space valued semilinear SPDEs with deterministic initial values.

The goal of the present paper is to provide the dual result of the Yamada-Watanabe theorem for SPDEs in the framework of the semigroup approach with path-dependent coefficients. For this purpose, consider a semilinear SPDE
\begin{align}\label{SPDE}
dX(t) = (A X(t) + \alpha(t,X)) dt + \sigma(t,X)dW(t).
\end{align}
Equations of the type (\ref{SPDE}) have been investigated, for example, in \cite{Da_Prato} and \cite{Atma-book}. Here $A$ is the generator of a $C_0$-semigroup $(S_t)_{t \geq 0}$ on the state space $H$ of the SPDE (\ref{SPDE}), which is assumed to be a separable Hilbert space. The coefficients $\alpha$ and $\sigma$ are path-dependent and satisfy standard measurability and adaptedness conditions; see Assumption \ref{ass-1}. Furthermore, $W$ is a cylindrical Wiener process. We assume that the semigroup $(S_t)_{t \geq 0}$ can be extended to a $C_0$-group on a larger Hilbert space $\calh$; see Assumption \ref{ass-2}. We remark that this assumption is satisfied for every \emph{pseudo-contractive}\footnote{The notion \emph{quasi-contractive} is also used in the literature.} semigroup. Under the previous assumptions, we will prove the following results. Concerning the precise definitions of the upcoming solution and uniqueness concepts, we refer to Section \ref{sec-framework}. 

\begin{theorem}\label{thm-1}
Let $\mu$ be a probability measure on $(H,\calb(H))$. Suppose there exists a mild solution $(X,W)$ to the SPDE (\ref{SPDE}) such that $\mu$ is the distribution of $X(0)$, and that joint uniqueness in law given $\mu$ holds for (\ref{SPDE}). Then pathwise uniqueness given $\mu$ holds for (\ref{SPDE}) as well.
\end{theorem}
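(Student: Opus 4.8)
The plan is to use the ``method of the moving frame'' to convert the mild formulation of (\ref{SPDE}) into a genuine infinite dimensional Itô equation, and then to run a Cherny-type duality argument on that equation. By Assumption \ref{ass-2} the semigroup $(S_t)_{t \geq 0}$ extends to a $C_0$-group $(U_t)_{t \in \bbr}$ on the larger space $\calh$, so for a mild solution $X$ I would set $Y(t) := U_{-t} X(t)$. Using $U_{-t} S_{t-s} = U_{-s}$ one formally obtains that $Y$ solves the SDE
\begin{align*}
Y(t) = Y(0) + \int_0^t U_{-s} \alpha(s, U_{\bdot} Y) \, ds + \int_0^t U_{-s} \sigma(s, U_{\bdot} Y) \, dW(s),
\end{align*}
in which the unbounded generator $A$ no longer appears, so the stochastic term is an honest Itô integral in $\calh$ and $Y(0) = X(0)$. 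The first step is therefore to verify that $X \mapsto Y = U_{-\bdot} X$ is a bijection between $H$-valued mild solutions $(X,W)$ of (\ref{SPDE}) and $\calh$-valued solutions $(Y,W)$ of the displayed SDE, that it fixes the driving noise $W$ and the initial value, and that it is measurable with a measurable inverse on the relevant path spaces.

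The second step is to check that this bijection respects both uniqueness concepts. Since $X \mapsto Y$ is a filtration-compatible bimeasurable map that leaves $W$ and $X(0)$ unchanged, the joint law of a pair $(X^1, X^2, W)$ is carried to that of $(Y^1, Y^2, W)$ by a fixed bimeasurable transformation; hence joint uniqueness in law given $\mu$ for (\ref{SPDE}) is equivalent to joint uniqueness in law given $\mu$ for the SDE, and likewise $X^1 = X^2$ almost surely if and only if $Y^1 = Y^2$ almost surely. This reduces Theorem \ref{thm-1} to the corresponding statement for the SDE, where the Itô structure makes the canonical-space formulation of the Yamada-Watanabe notions available.

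The third step is the core duality argument, carried out for the SDE. Suppose $(Y^1, W)$ and $(Y^2, W)$ are two solutions on a common stochastic basis, driven by the same $W$, with $Y^1(0) = Y^2(0)$ having law $\mu$; I want to conclude $Y^1 = Y^2$ almost surely. The key observation is that \emph{both} triples $(Y^1, Y^2, W)$ and $(Y^1, Y^1, W)$ are admissible pairs of solutions for the notion of joint uniqueness in law: in each case the two marginal processes solve the SDE, are driven by the common $W$, and start from the same $\mu$-distributed initial value. Joint uniqueness in law given $\mu$ then forces their laws on the canonical path space to coincide, so that $(Y^1, Y^2, W)$ and $(Y^1, Y^1, W)$ are equal in law. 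The second law is supported on the measurable diagonal $\{(y^1, y^2) : y^1 = y^2\}$ with full mass, hence so is the first, which gives $Y^1 = Y^2$ almost surely. Transporting this conclusion back through the moving frame yields $X^1 = X^2$, i.e.\ pathwise uniqueness given $\mu$ for (\ref{SPDE}).

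I expect the main obstacle to lie in the second step rather than in the short diagonal argument of the third. The delicate point is to arrange the definitions so that ``joint uniqueness in law'' is literally a statement about the law of a triple on a fixed canonical path space, and then to certify that the moving-frame bijection is fully compatible with this structure: that it sends admissible pairs of solutions to admissible pairs, preserves the joint compatibility of the two components with the common filtration carrying the cylindrical Wiener process $W$, and remains bimeasurable in the presence of the path-dependent coefficients $\alpha$ and $\sigma$ from Assumption \ref{ass-1}. One must also ensure that the transformed integrands $U_{-s}\sigma(s, U_{\bdot} Y)$ are admissible Hilbert-Schmidt-type stochastic integrands in $\calh$, so that the SDE is genuinely well posed. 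Once this bookkeeping is in place, the symmetry contained in the diagonal comparison closes the proof.
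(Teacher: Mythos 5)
The fatal problem is your third step. Joint uniqueness in law, as defined in the paper (and in \cite{Cherny}, \cite{Rehmeier}), is a statement about the two-component law of a \emph{single} solution paired with its driving noise: for any two solutions $(\bbb,Y,W)$ and $(\bbb',Y',W')$ with the same initial law one has $\bbp \circ (Y,\bar{W})^{-1} = \bbp' \circ (Y',\bar{W'})^{-1}$. It says nothing about the law of a \emph{triple} $(Y^1,Y^2,W)$ of two solutions coupled on a common basis. Applying it to your two triples only yields $\bbp \circ (Y^1,\bar{W})^{-1} = \bbp \circ (Y^2,\bar{W})^{-1}$, which places no constraint whatsoever on the joint distribution of $(Y^1,Y^2)$, so you cannot conclude that the law of $(Y^1,Y^2,W)$ coincides with that of $(Y^1,Y^1,W)$ and sits on the diagonal. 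A structural symptom of the error: your diagonal argument never uses the hypothesis that a mild solution exists, so if it were valid it would prove ``joint uniqueness in law $\Rightarrow$ pathwise uniqueness'' unconditionally. That is false already for Tanaka's equation $dY = \sgn(Y)\,dW$, $Y(0)=0$: there $W = \int_0^{\bdot} \sgn(Y(s))\,dY(s)$ is a measurable functional of $Y$, so the joint law of $(Y,W)$ is pinned down by the law of $Y$ and joint uniqueness in law holds, yet $(-Y,W)$ is a second solution with the same noise. The paper's infinite dimensional Tanaka example in Section \ref{sec-examples} exhibits exactly this for an SPDE of type (\ref{SPDE}). The existence of the mild solution is not bookkeeping; it is the engine of the proof. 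The correct mechanism (Theorem \ref{thm-1-Y}, i.e.\ \cite[Thm.~3.1]{Rehmeier} following Cherny) is: from the mild solution one extracts a solution functional $F$, joint uniqueness in law transports the a.s.\ identity $Y = F_{\nu}(Y(0),\bar{W})$ (a measurable statement about the law of the pair) to \emph{every} solution with initial law $\nu$, and then two solutions with the same noise and the same initial value are a.s.\ equal to the same functional of $(Y(0),\bar{W})$, hence to each other.

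Your reduction steps also do not survive scrutiny as stated, though here the paper shows the intent can be repaired. Setting $Y(t) := U_{-t}X(t)$ is not available when $(S_t)_{t\geq 0}$ only dilates to a group on the larger space $\calh$ of Assumption \ref{ass-2}: one must embed via $\ell$, and the correct object is $Y = \ell X(0) + \int_0^{\bdot} U_{-s}\ell\,\alpha(s,X)\,ds + \int_0^{\bdot} U_{-s}\ell\,\sigma(s,X)\,dW(s)$ with $X = \Gamma(Y)$ (Lemmas \ref{lemma-XY} and \ref{lemma-YX}); note $Y \neq U_{-\bdot}\,\ell X$ since $\ell\pi \neq \Id_{\calh}$. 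Moreover the correspondence is \emph{not} a bijection respecting all uniqueness notions: plain uniqueness in law does not transfer from (\ref{SPDE}) to (\ref{SDE}) (the SDE has extra solution mass in $\ker(\Gamma) = \bbw(\calh_2)$, which is precisely why Theorem \ref{thm-2} needs the group extension on $H$ itself). What Theorem \ref{thm-1} needs, and what the paper proves, are the three one-directional transfers: joint uniqueness in law descends from (\ref{SPDE}) to (\ref{SDE}) (Proposition \ref{prop-1-b}), pathwise uniqueness lifts back (Proposition \ref{prop-1-c}), and --- the genuinely technical step you wave at --- a mild solution of (\ref{SPDE}) induces a strong solution of (\ref{SDE}), which requires building the functional $G \in \hat{\cale}(\calh)$ from $F$ through the pathwise approximation of the It\^{o} integral in Theorem \ref{thm-integral} (Proposition \ref{prop-1-a}). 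With those three transfers in hand, the theorem follows by citing the SDE-level dual Yamada--Watanabe theorem, not by a diagonal comparison.
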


\begin{corollary}\label{cor-1}
Suppose that the SPDE (\ref{SPDE}) has a mild solution, and that joint uniqueness in law holds for (\ref{SPDE}). Then pathwise uniqueness holds for (\ref{SPDE}) as well.
\end{corollary}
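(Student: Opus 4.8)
The plan is to derive Corollary 1.2 directly from Theorem 1.1. Both statements concern the same SPDE (\ref{SPDE}), and the corollary is the "unconditional" version of the theorem: rather than fixing a single initial distribution $\mu$, it asserts the implication for the equation as a whole. So the natural strategy is to reduce the global statements to their $\mu$-indexed counterparts and invoke Theorem 1.1 for every admissible $\mu$.

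Let me think about how these concepts relate. The theorem uses "joint uniqueness in law given $\mu$" and "pathwise uniqueness given $\mu$"; the corollary uses the unqualified "joint uniqueness in law" and "pathwise uniqueness". I expect the precise definitions in Section 2 to establish that the unqualified notion is exactly the $\mu$-indexed notion holding for every probability measure $\mu$ on $(H, \calb(H))$ — or at least for every $\mu$ that arises as the distribution of the initial value of some mild solution.

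So here is how I would carry out the argument.

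First I would write out, from the definitions in Section \ref{sec-framework}, the equivalence: \emph{joint uniqueness in law holds} if and only if \emph{joint uniqueness in law given $\mu$ holds for every} $\mu$, and similarly \emph{pathwise uniqueness holds} if and only if \emph{pathwise uniqueness given $\mu$ holds for every} $\mu$. This bookkeeping step is where I must be careful: I need to confirm that the quantification over $\mu$ in the unqualified definitions matches the hypotheses of Theorem 1.1.

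Second, I would fix an arbitrary probability measure $\mu$ on $(H,\calb(H))$ and verify the hypotheses of Theorem \ref{thm-1} for this $\mu$. The corollary assumes that (\ref{SPDE}) has a mild solution and that joint uniqueness in law holds. From the latter, joint uniqueness in law given $\mu$ holds for our fixed $\mu$. The delicate point — and the main obstacle — is supplying the other hypothesis of Theorem 1.1, namely the \emph{existence of a mild solution $(X,W)$ whose initial value $X(0)$ has distribution $\mu$}. The corollary only guarantees existence of \emph{some} mild solution, with \emph{some} initial distribution, not one with the prescribed $\mu$. I would resolve this by restricting the quantifier: rather than proving pathwise uniqueness given $\mu$ for literally every $\mu$, I only need it for those $\mu$ that actually occur as initial distributions of mild solutions, since the definition of (unqualified) pathwise uniqueness compares two solutions, and any two solutions on a common stochastic basis with the same initial value automatically have an initial distribution $\mu$ for which a solution with that initial law exists. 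Thus for each relevant $\mu$ the existence hypothesis of Theorem \ref{thm-1} is automatically met.

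Finally, applying Theorem \ref{thm-1} to each such $\mu$ yields pathwise uniqueness given $\mu$, and assembling these across all relevant $\mu$ gives pathwise uniqueness for (\ref{SPDE}) in the unqualified sense, completing the proof. The entire argument is a short reduction; the only real content is the correct handling of the initial-distribution quantifier, which is why I expect the definitional matching in the first and second steps to be the crux rather than any analytic estimate.
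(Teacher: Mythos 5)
There is a genuine gap, and it lies exactly where you located the ``crux''. Your patch --- restricting attention to those $\mu$ that arise as initial distributions of the solutions being compared --- does not supply the existence hypothesis of Theorem \ref{thm-1}. The definition of pathwise uniqueness compares two \emph{martingale} solutions $(X,W)$ and $(X',W)$; the measure $\mu = \bbp \circ X(0)^{-1}$ they determine therefore comes equipped with a martingale solution, whereas Theorem \ref{thm-1} requires a \emph{mild} solution with initial law $\mu$ (in the sense of Definition \ref{def-mild-sol}, i.e.\ one representable as $F_{\mu}(X(0),\bar{W})$ for some $F \in \hat{\cale}(H)$). Existence of a martingale solution with a given initial law does not imply existence of a mild one --- this is precisely the distinction the whole Yamada--Watanabe circle of results turns on, and the Tanaka-type equation (\ref{SPDE-Tanaka}) in Section \ref{sec-examples} exhibits an SPDE with martingale solutions but no mild solution. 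So the step ``for each relevant $\mu$ the existence hypothesis of Theorem \ref{thm-1} is automatically met'' is false as argued.

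The underlying problem is a misreading of the corollary's hypothesis. In this paper ``the SPDE (\ref{SPDE}) has a mild solution'' is the defined term of Definition \ref{def-has-mild-sol}: there is a single map $F \in \hat{\cale}(H)$ such that for \emph{every} stochastic basis carrying a standard $\bbr^{\infty}$-Wiener process and \emph{every} $\calf_0$-measurable initial value $\xi$, the pair $(F_{\bbp \circ \xi^{-1}}(\xi,\bar{W}),W)$ is a solution with initial value $\xi$; by the remark following that definition it is moreover a mild solution in the sense of Definition \ref{def-mild-sol}. Hence for \emph{every} probability measure $\mu$ on $(H,\calb(H))$ one obtains a mild solution whose initial value has distribution $\mu$ (choose any basis supporting a Wiener process and an $\calf_0$-measurable $\xi$ with law $\mu$). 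With that reading, the corollary is immediate, which is how the paper treats it: fix $\mu$ arbitrary; joint uniqueness in law gives joint uniqueness in law given $\mu$; the hypothesis gives a mild solution with initial law $\mu$; Theorem \ref{thm-1} yields pathwise uniqueness given $\mu$; and since the unqualified notions are, by the stated conventions, exactly the $\mu$-indexed notions quantified over all $\mu$ (your first bookkeeping step, which is correct), pathwise uniqueness follows. No quantifier restriction is needed, and none is available by your route.
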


\begin{theorem}\label{thm-2}
Suppose that the semigroup $(S_t)_{t \geq 0}$ can be extended to a $C_0$-group $(U_t)_{t \in \bbr}$ on the state space $H$. Then for every $x \in H$ the following statements are equivalent:
\begin{enumerate}
\item[(i)] Uniqueness in law given $\delta_x$ holds for the SPDE (\ref{SPDE}).

\item[(ii)] Joint uniqueness in law given $\delta_x$ holds for the SPDE (\ref{SPDE}).
\end{enumerate}
\end{theorem}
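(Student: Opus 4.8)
The implication (ii) $\Rightarrow$ (i) is immediate, since equality of the joint laws of $(X,W)$ forces equality of the marginal laws of $X$; the entire content therefore lies in (i) $\Rightarrow$ (ii). My plan is to transport the problem to an infinite dimensional SDE by the method of the moving frame and then to invoke the corresponding equivalence at the level of SDEs (the infinite dimensional analogue of Cherny's theorem), where uniqueness in law and joint uniqueness in law coincide.

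For the reduction, I use that $(S_t)_{t\geq 0}$ extends to a $C_0$-group $(U_t)_{t\in\bbr}$ on $H$ and, for a mild solution $(X,W)$ with $X(0)=x$, set $Y(t) := U_{-t}X(t)$. Since $S_t = U_t$ for $t \geq 0$, inserting the mild form of $X$ and applying $U_{-t}$ shows that $Y$ is an $H$-valued strong solution of
\[
dY(t) = \tilde\alpha(t,Y)\,dt + \tilde\sigma(t,Y)\,dW(t), \qquad Y(0)=x,
\]
with path-dependent coefficients $\tilde\alpha(t,Y) := U_{-t}\alpha(t,U_\bullet Y)$ and $\tilde\sigma(t,Y) := U_{-t}\sigma(t,U_\bullet Y)$, where $U_\bullet Y$ denotes the path $s \mapsto U_s Y(s)$. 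The map $X \mapsto U_{-\bullet}X$ is a homeomorphism of $C(\bbr_+;H)$ with inverse $Y \mapsto U_\bullet Y$; it leaves both the driving process $W$ and the initial value unchanged, and is therefore a bimeasurable bijection between mild solutions of (\ref{SPDE}) and weak solutions of the SDE that preserves the law of the first component as well as the joint law of the pair. It is precisely here that the stronger hypothesis of Theorem \ref{thm-2} — a group on $H$ rather than merely on the auxiliary space $\calh$ — enters: it guarantees that $Y$ is genuinely $H$-valued, so that uniqueness in law and joint uniqueness in law for (\ref{SPDE}) are equivalent to the respective properties of an honest $H$-valued SDE. After checking that $\tilde\alpha,\tilde\sigma$ inherit the measurability and adaptedness of Assumption \ref{ass-1}, it suffices to prove that, for this SDE, uniqueness in law given $\delta_x$ implies joint uniqueness in law given $\delta_x$.

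For the SDE-level step I would reconstruct the noise from the solution. Given a weak solution $(Y,W)$, the process
\[
M(t) := Y(t) - x - \int_0^t \tilde\alpha(s,Y)\,ds = \int_0^t \tilde\sigma(s,Y)\,dW(s)
\]
is a measurable functional of $Y$ alone, and so is its angle bracket, with operator density $\tilde\sigma(s,Y)\tilde\sigma(s,Y)^*$. Writing $Q_s$ for the orthogonal projection onto $(\ker\tilde\sigma(s,Y))^\perp = \overline{\ran\,\tilde\sigma(s,Y)^*}$ and $\tilde\sigma(s,Y)^+$ for the pseudo-inverse, the visible part of the noise is recovered from $Y$ through $\int_0^t Q_s\,dW(s) = \int_0^t \tilde\sigma(s,Y)^+\,dM(s)$, while the complementary part $N(t) := \int_0^t (\Id - Q_s)\,dW(s)$ is an orthogonal continuous local martingale whose bracket is again a functional of $Y$. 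The identity $W = \int \tilde\sigma^+\,dM + N$ then expresses $W$ through $Y$ and $N$, so the task reduces to showing that the joint law of $(Y,N)$ is determined by the law of $Y$. This I would obtain by viewing $(Y,N)$ as the solution of the enlarged SDE driven by $W$ with diffusion coefficient $(\tilde\sigma;\Id - Q)$, which is injective on the whole noise space; uniqueness in law of this non-degenerate enlarged system — deduced from uniqueness in law of $Y$ together with the fact that the conditional law of the orthogonal increments given $Y$ is a universal, coefficient-determined kernel — yields that the joint law of $(Y,W)$ depends only on the law of $Y$, which is joint uniqueness in law.

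The genuinely delicate step is this SDE-level reconstruction, and within it the treatment of the degenerate directions of $\tilde\sigma$. In infinite dimensions, with a cylindrical Wiener process, one must produce jointly measurable selections of the projections $Q_s$ onto $(\ker\tilde\sigma(s,Y))^\perp$ and of the Moore–Penrose pseudo-inverses of the Hilbert–Schmidt operators $\tilde\sigma(s,Y)$, and one must show that the orthogonal martingale $N$ (itself only cylindrical, since $\Id - Q_s$ is not trace class) has, conditionally on $Y$, a law that is the same for every weak solution, so that it can be recreated from an independent noise by a solution-independent functional. Establishing this universality — in effect, that $(Y,N)$ still enjoys uniqueness in law once $Y$ does — is the heart of Cherny's argument and the part that demands the most care in the Hilbert-space, path-dependent setting; by contrast, the continuity and bimeasurability of the moving frame and the inheritance of Assumption \ref{ass-1} by $\tilde\alpha,\tilde\sigma$ are routine.
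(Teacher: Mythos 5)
Your reduction step is exactly the paper's. Setting $Y(t) = U_{-t}X(t)$ is the paper's transformation $\Gamma = U$: under the hypothesis of Theorem \ref{thm-2}, Assumption \ref{ass-2} holds with $\calh = H$ and $\ell = \pi = \Id_H$, so that $\bbv_2 = \ker(\Gamma) = \{0\}$ (Remark \ref{rem-v2}); your coefficients $\tilde{\alpha},\tilde{\sigma}$ coincide with the paper's $\bar{\alpha},\bar{\sigma}$; and your observation that the group-on-$H$ hypothesis is what makes uniqueness in law itself (not merely uniqueness modulo the kernel of the projection) transfer from the SPDE to the SDE is precisely the content of Proposition \ref{prop-2-aa}, while the converse transfer of joint uniqueness in law is Proposition \ref{prop-2-c}. (A terminological slip: in the paper's conventions $Y$ is a \emph{weak} solution of (\ref{SDE}) on the given basis; ``strong'' would mean it is produced by a functional $F \in \hat{\cale}(\calh)$, which you neither have nor need here.)

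The genuine gap is in your SDE-level step. Where the paper closes the argument by citing the infinite-dimensional dual Yamada--Watanabe theorem for SDEs (Theorem \ref{thm-2-Y}, i.e.\ Rehmeier's Theorem 3.2, which covers exactly the Hilbert-space-valued, path-dependent, cylindrical-noise setting produced by the reduction), you propose to re-derive it, and your sketch assumes its own conclusion: the assertion that ``the conditional law of the orthogonal increments given $Y$ is a universal, coefficient-determined kernel'' is not an available input --- it is essentially equivalent to joint uniqueness in law and is the whole content of Cherny's theorem. The orthogonal local martingale $N$ need not be independent of $Y$, and the bracket computation ($\langle N \rangle_t = \int_0^t (\Id - Q_s)\,ds$, a functional of $Y$) does not pin down the conditional law of $N$ given $Y$; proving that uniqueness in law of $Y$ nevertheless forces this conditional law to be solution-independent is Cherny's delicate coupling argument, and carrying it out with measurable selections of pseudo-inverses and a merely cylindrical orthogonal noise is precisely what Rehmeier's paper supplies. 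So your deduction of uniqueness in law for the enlarged system ``from uniqueness in law of $Y$ together with the fact that the conditional law \dots is universal'' is circular as written; flagging the difficulty honestly does not discharge it. The repair is immediate and is what the paper does: after your (correct) moving-frame reduction, invoke Theorem \ref{thm-2-Y} as a black box, at which point your proof coincides with the paper's.
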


\begin{corollary}\label{cor-2}
Suppose that the semigroup $(S_t)_{t \geq 0}$ can be extended to a $C_0$-group $(U_t)_{t \in \bbr}$ on the state space $H$. Then the following statements are equivalent:
\begin{enumerate}
\item[(i)] $\delta$-uniqueness in law holds for the SPDE (\ref{SPDE}).

\item[(ii)] Joint $\delta$-uniqueness in law holds for the SPDE (\ref{SPDE}).
\end{enumerate}
\end{corollary}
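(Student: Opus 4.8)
The plan is to obtain Corollary~\ref{cor-2} as an immediate consequence of Theorem~\ref{thm-2}, by passing from a single deterministic initial value to all of them at once. First I would unwind the definitions from Section~\ref{sec-framework}: \emph{$\delta$-uniqueness in law} should mean that uniqueness in law given $\delta_x$ holds for every $x \in H$, and \emph{joint $\delta$-uniqueness in law} should mean that joint uniqueness in law given $\delta_x$ holds for every $x \in H$. In other words, each $\delta$-prefixed notion is precisely the conjunction, over all deterministic initial values $x \in H$, of the corresponding ``given $\delta_x$'' notion appearing in Theorem~\ref{thm-2}.

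Once this is in place, the equivalence follows by applying Theorem~\ref{thm-2} pointwise in $x$. The standing hypothesis of the corollary --- that $(S_t)_{t \ge 0}$ extends to a $C_0$-group $(U_t)_{t \in \bbr}$ on $H$ --- is exactly the hypothesis under which Theorem~\ref{thm-2} is valid, so for each fixed $x \in H$ we have the equivalence of uniqueness in law given $\delta_x$ and joint uniqueness in law given $\delta_x$. For the implication (ii) $\Rightarrow$ (i), I would assume joint $\delta$-uniqueness in law; then joint uniqueness in law given $\delta_x$ holds for every $x$, whence by Theorem~\ref{thm-2} uniqueness in law given $\delta_x$ holds for every $x$, which is $\delta$-uniqueness in law. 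The reverse implication (i) $\Rightarrow$ (ii) is obtained in the same way from the converse direction of Theorem~\ref{thm-2}.

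There is essentially no analytic obstacle here; the only point requiring care is definitional. I would double-check against Section~\ref{sec-framework} that the $\delta$-notions really are the pointwise-in-$x$ conjunctions described above, and in particular that they do not secretly compare solutions started from \emph{different} Dirac masses or impose an additional joint condition across initial values. Provided the definitions match, no extra work --- no re-running of the moving-frame reduction or of the martingale-solution machinery underlying Theorem~\ref{thm-2} --- is needed, and the corollary is immediate.
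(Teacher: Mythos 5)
Your proof is correct and takes essentially the same route as the paper, which dispatches Corollary~\ref{cor-2} in one line as an immediate consequence of Theorem~\ref{thm-2}. Your definitional caution is also resolved the way you hoped: since uniqueness in law given $\calp$ only compares two solutions satisfying $\bbp \circ X(0)^{-1} = \bbp' \circ X'(0)^{-1} \in \calp$, the $\delta$-notions with $\calp = \{ \delta_x : x \in H \}$ never compare solutions started from different Dirac masses, so they are exactly the pointwise-in-$x$ conjunctions you describe and Theorem~\ref{thm-2} applies for each fixed $x \in H$.
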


Of course, Corollary \ref{cor-1} is an immediate consequence of Theorem \ref{thm-1}, and Corollary \ref{cor-2} is an immediate consequence of Theorem \ref{thm-2}. Furthermore, we obtain the following extension of the Yamada-Watanabe theorem for semilinear SPDEs. Its proof is essentially a consequence of Theorem \ref{thm-1} and the Yamada-Watanabe theorem from \cite{Tappe-YW}.

\begin{theorem}\label{thm-3}
The following statements are equivalent:
\begin{enumerate}
\item[(i)] The SPDE (\ref{SPDE}) has a unique mild solution.

\item[(ii)] For each probability measure $\mu$ on $(H,\calb(H))$ there exists a mild solution $(X,W)$ to (\ref{SPDE}) such that $\mu$ is the distribution of $X(0)$, and pathwise uniqueness for (\ref{SPDE}) holds.

\item[(iii)] For each probability measure $\mu$ on $(H,\calb(H))$ there exists a martingale solution $(X,W)$ to (\ref{SPDE}) such that $\mu$ is the distribution of $X(0)$, and pathwise uniqueness for (\ref{SPDE}) holds.

\item[(iv)] For each probability measure $\mu$ on $(H,\calb(H))$ there exists a mild solution $(X,W)$ to (\ref{SPDE}) such that $\mu$ is the distribution of $X(0)$, and joint uniqueness in law for (\ref{SPDE}) holds.
\end{enumerate}
\end{theorem}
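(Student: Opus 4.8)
The plan is to deduce the four-fold equivalence by combining the Yamada--Watanabe theorem of \cite{Tappe-YW} with Theorem \ref{thm-1}, the genuinely new ingredient being the passage through joint uniqueness in law in (iv). The backbone is the equivalence of (i), (ii) and (iii), which is exactly what the Yamada--Watanabe theorem in \cite{Tappe-YW} provides: for path-dependent coefficients under Assumptions \ref{ass-1} and \ref{ass-2}, the existence of a unique strong (mild) solution is equivalent to weak existence for every initial distribution $\mu$ together with pathwise uniqueness, and this equivalence is formulated there both for mild and for martingale solutions. Concretely I would argue (i) $\Rightarrow$ (ii) and (iii) $\Rightarrow$ (i) by direct appeal to that theorem, and close the cycle with the elementary observation that every mild solution is in particular a martingale solution, giving (ii) $\Rightarrow$ (iii). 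Thus (i) $\Leftrightarrow$ (ii) $\Leftrightarrow$ (iii) requires no new work beyond citing \cite{Tappe-YW}.

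It remains to tie (iv) into this circle, i.e.\ to prove (ii) $\Leftrightarrow$ (iv). For (iv) $\Rightarrow$ (ii) I would invoke Theorem \ref{thm-1}. The only point to check is that joint uniqueness in law, the global notion appearing in (iv), entails joint uniqueness in law given $\mu$ for every probability measure $\mu$ on $(H,\calb(H))$; this is immediate from the definitions, since the global notion simply quantifies the ``given $\mu$'' notion over all $\mu$. Fixing $\mu$ and feeding the mild solution with initial distribution $\mu$ supplied by (iv) into Theorem \ref{thm-1}, we obtain pathwise uniqueness given $\mu$. As $\mu$ was arbitrary, pathwise uniqueness holds, and since weak existence for every $\mu$ is already part of (iv), statement (ii) follows.

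For the reverse implication (ii) $\Rightarrow$ (iv) I would route through the strong solution furnished by \cite{Tappe-YW}. Indeed (ii) implies (i), so the SPDE admits a unique strong mild solution, which is a measurable functional $F$ of the initial value and the driving cylindrical Wiener process. Given any mild solution $(X,W)$ with $X(0) \sim \mu$, pathwise uniqueness forces $X = F(X(0),W)$ almost surely; since $X(0)$ is independent of $W$, the law of the pair $(X(0),W)$ is determined by $\mu$, and hence so is the joint law of $(X,W) = (F(X(0),W),W)$. Any two such solutions with common initial law therefore have the same joint law, which is joint uniqueness in law, so (iv) holds. This establishes (ii) $\Leftrightarrow$ (iv) and completes the equivalence.

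The main obstacle I anticipate lies not in any individual implication but in the bookkeeping of the various solution and uniqueness concepts. One must verify that the ``given $\mu$'' notions driving Theorem \ref{thm-1} match the global notions in (i)--(iv) precisely where the quantifier over $\mu$ is inserted or removed, and that the hypotheses of the Yamada--Watanabe theorem from \cite{Tappe-YW} coincide with Assumptions \ref{ass-1} and \ref{ass-2} in force here, so that the cited results apply verbatim to both the mild and the martingale formulations. Once these definitions are aligned, each implication reduces to a short deduction, and no estimates or constructions beyond those already contained in Theorem \ref{thm-1} and \cite{Tappe-YW} are needed.
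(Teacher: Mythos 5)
Your proposal is correct and follows essentially the same route as the paper: the equivalence (i) $\Leftrightarrow$ (ii) $\Leftrightarrow$ (iii) by citing the Yamada--Watanabe theorem of \cite{Tappe-YW}, the implication (iv) $\Rightarrow$ (ii) via Theorem \ref{thm-1} after observing that the global notion of joint uniqueness in law contains the ``given $\mu$'' notions, and the remaining implication through the representation $X = F_{\mathbb{P} \circ X(0)^{-1}}(X(0),\bar{W})$, which is exactly the content of Remark \ref{rem-unique-mild-implies-joint} (the paper phrases it as (i) $\Rightarrow$ (iv); your (ii) $\Rightarrow$ (iv) is the same argument composed with (ii) $\Rightarrow$ (i)). One cosmetic caveat: in that last step joint uniqueness in law quantifies over \emph{martingale} solutions, not only mild ones, but since Definition \ref{def-unique-sol}(3) yields the representation for every martingale solution (equivalently, one can construct the strong solution with the same initial value on the same basis and apply pathwise uniqueness), your argument applies verbatim.
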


The next result can be regarded as a dual statement to the Yamada-Watanabe theorem. Its proof immediately follows from Corollary \ref{cor-2} and Theorem \ref{thm-1}.

\begin{theorem}\label{thm-4}
Suppose that the semigroup $(S_t)_{t \geq 0}$ can be extended to a $C_0$-group $(U_t)_{t \in \bbr}$ on the state space $H$. If the SPDE (\ref{SPDE}) has a mild solution and $\delta$-uniqueness in law holds, then $\delta$-pathwise uniqueness holds for (\ref{SPDE}) as well.
\end{theorem}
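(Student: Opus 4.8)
The plan is to read off Theorem \ref{thm-4} as a direct concatenation of Corollary \ref{cor-2} and Theorem \ref{thm-1}, exactly as advertised in the text. The standing hypothesis that $(S_t)_{t \geq 0}$ extends to a $C_0$-group $(U_t)_{t \in \bbr}$ on $H$ is precisely the assumption of Corollary \ref{cor-2}, so I may invoke that corollary freely and without further cost.

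First I would feed the assumed $\delta$-uniqueness in law into Corollary \ref{cor-2}. Since that corollary asserts, under the group assumption, the equivalence of $\delta$-uniqueness in law and joint $\delta$-uniqueness in law, I immediately obtain that joint $\delta$-uniqueness in law holds for (\ref{SPDE}); unwinding the definition, this means that joint uniqueness in law given $\delta_x$ holds for every $x \in H$. Next I would fix an arbitrary $x \in H$ and try to apply Theorem \ref{thm-1} with $\mu = \delta_x$. That theorem requires two inputs: existence of a mild solution $(X,W)$ whose initial value $X(0)$ is distributed according to $\delta_x$, and joint uniqueness in law given $\delta_x$. The second input is already secured by the previous step, so only existence remains to be matched.

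The main obstacle I anticipate is precisely this existence matching: our hypothesis grants only the existence of \emph{some} mild solution, a priori with an arbitrary (non-Dirac) initial distribution, whereas Theorem \ref{thm-1} needs a solution started deterministically at the given $x$. I would resolve this by a case distinction over $x$, checked against the precise definitions of Section \ref{sec-framework}. If no mild solution with initial distribution $\delta_x$ exists, then there is no pair of mild solutions sharing the driving noise and the deterministic initial value $x$, so pathwise uniqueness given $\delta_x$ holds vacuously. If such a solution does exist, then Theorem \ref{thm-1} applies verbatim and yields pathwise uniqueness given $\delta_x$. In either case pathwise uniqueness given $\delta_x$ holds, and since $x \in H$ was arbitrary, $\delta$-pathwise uniqueness follows. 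Apart from this bookkeeping of initial values, the argument is a purely formal chaining of the two cited results and should require no additional analytic input.
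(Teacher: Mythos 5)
Your main chain---feed the assumed $\delta$-uniqueness in law into Corollary \ref{cor-2} to obtain joint $\delta$-uniqueness in law, then apply Theorem \ref{thm-1} with $\mu = \delta_x$ for each $x \in H$---is exactly the paper's proof, which consists of nothing beyond this concatenation. The case distinction you append, however, is both unnecessary and, as written, invalid. It is unnecessary because the anticipated mismatch of initial laws rests on a misreading of the hypothesis: ``the SPDE (\ref{SPDE}) has a mild solution'' is the defined notion of Definition \ref{def-has-mild-sol}, not the mere existence of a single mild solution pair with some unspecified initial distribution. That definition provides one map $F \in \hat{\cale}(H)$ such that for \emph{every} stochastic basis, every standard $\bbr^{\infty}$-Wiener process $W$ and every $\calf_0$-measurable initial value $\xi$, the pair $(X,W)$ with $X = F_{\bbp \circ \xi^{-1}}(\xi,\bar{W})$ is a mild solution with $X(0) = \xi$ almost surely (see the remark following Definition \ref{def-has-mild-sol}). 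Taking $\xi \equiv x$ on, say, the canonical basis carrying a standard $\bbr^{\infty}$-Wiener process yields, for each $x \in H$, a mild solution with initial distribution $\delta_x$, so the existence input of Theorem \ref{thm-1} is always available and your second branch never occurs.

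The invalid step is the vacuity argument within that branch: you claim that if no mild solution with initial law $\delta_x$ exists, then pathwise uniqueness given $\delta_x$ holds vacuously because ``there is no pair of mild solutions sharing the driving noise.'' But pathwise uniqueness in this paper quantifies over \emph{martingale} solutions, not mild solutions, and martingale solutions with initial law $\delta_x$ can perfectly well exist, and fail to be pathwise unique, when no mild solution does. The paper's Tanaka-type example (\ref{SPDE-Tanaka}) exhibits precisely this constellation: martingale solutions started at $0$ exist, uniqueness in law holds, no mild solution exists, and pathwise uniqueness fails (both $(X,W)$ and $(-X,W)$ solve the equation). So the vacuity reasoning is not a legitimate fallback; had the weaker reading of the existence hypothesis been the intended one, your second branch would be a genuine gap rather than a harmless case. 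The proof is correct only once the hypothesis is read as Definition \ref{def-has-mild-sol}, under which the case distinction disappears and the argument reduces to the paper's two-line concatenation.
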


The main idea for proving Theorems \ref{thm-1} and \ref{thm-2} is as follows. We consider the infinite dimensional SDE
\begin{align}\label{SDE}
dY(t) = \bar{\alpha}(t,Y) dt + \bar{\sigma}(t,Y) dW(t)
\end{align}
on the larger Hilbert space $\calh$ from Assumption \ref{ass-2}, where the coefficients $\bar{\alpha}$ and $\bar{\sigma}$ are defined by means of $\alpha$ and $\sigma$. Then the SPDE (\ref{SPDE}) and the SDE (\ref{SDE}) are connected, which is due to the ``method of the moving frame'', which has been presented in \cite{SPDE}, and which has also been used for the proof of the Yamada-Watanabe theorem in \cite{Tappe-YW}. For the SDE (\ref{SDE}) the statements of Theorems \ref{thm-1} and \ref{thm-2} hold true by virtue of \cite{Rehmeier}, and many properties transfer between the two equations (\ref{SPDE}) and (\ref{SDE}). One exception is uniqueness in law, which, in contrast to pathwise uniqueness and joint uniqueness in law, does not transfer from the SPDE (\ref{SPDE}) to the SDE (\ref{SDE}). For this reason we assume in Theorem \ref{thm-2} that the semigroup $(S_t)_{t \geq 0}$ extends to a $C_0$-group on the given state space $H$. From a practical perspective, this does not mean a severe restriction, because the solutions to (\ref{SPDE}) can always be realized on a larger Hilbert space, where this assumption is fulfilled; see Lemma \ref{lemma-larger-space}.

The remainder of this paper is organized as follows. In Section \ref{sec-framework} we present the general framework, in Section \ref{sec-SDEs} we provide the required results about infinite dimensional SDEs, and in Section \ref{sec-proof} we provide the proofs of the main results. In Section \ref{sec-examples} we provide some examples, illustrating our findings.

\section{Framework and definitions}\label{sec-framework}

In this section, we prepare the required framework and definitions.
Let $H$ be a separable Hilbert space and let $(S_t)_{t \geq 0}$ be a $C_0$-semigroup on $H$ with infinitesimal generator $A : \cald(A) \subset H \rightarrow H$. The path space $\mathbb{W}(H) := C(\mathbb{R}_+;H)$ is the space of all continuous functions from $\mathbb{R}_+$ to $H$. Equipped with the metric
\begin{align*}
\rho(w_1,w_2) := \sum_{k=1}^{\infty} 2^{-k} \Big( \sup_{t \in [0,k]} \| w_1(t) - w_2(t) \| \wedge 1 \Big),
\end{align*}
the path space $(\mathbb{W}(H),\rho)$ is a Polish space; actually a Fr\'{e}chet space. Furthermore, we define the subspace
\begin{align*}
\mathbb{W}_0(H) := \{ w \in \mathbb{W}(H) : w(0) = 0 \}
\end{align*}
consisting of all functions from the path space $\mathbb{W}(H)$ starting in zero.  Let $U$ be another separable Hilbert space and let $L_2(U,H)$ denote the space of all Hilbert-Schmidt operators from $U$ to $H$ equipped with the Hilbert-Schmidt norm. Let $\alpha : \mathbb{R}_+ \times \mathbb{W}(H) \rightarrow H$ and $\sigma : \mathbb{R}_+ \times \mathbb{W}(H) \rightarrow L_2(U,H)$ be mappings. For $t \in \mathbb{R}_+$ we denote by
$\calb_t(\mathbb{W}(H))$ the $\sigma$-algebra generated by all maps $\mathbb{W}(H) \rightarrow H$, $w \mapsto w(s)$ for $s \in [0,t]$.

\begin{assumption}\label{ass-1}
We suppose that the following conditions are satisfied:
\begin{enumerate}
\item $\alpha$ is $\calb(\mathbb{R}_+) \otimes \calb(\mathbb{W}(H)) / \calb(H)$-measurable such that for each $t \in \mathbb{R}_+$ the mapping $\alpha(t,\bullet)$ is $\calb_t(\mathbb{W}(H))/\calb(H)$-measurable.

\item $\sigma$ is $\calb(\mathbb{R}_+) \otimes \calb(\mathbb{W}(H)) / \calb(L_2(U,H))$-measurable such that for each $t \in \mathbb{R}_+$ the mapping $\sigma(t,\bullet)$ is $\calb_t(\mathbb{W}(H))/\calb(L_2(U,H))$-measurable.
\end{enumerate}
\end{assumption}

We call a filtered probability space $\mathbb{B} = (\Omega,\calf,(\calf_t)_{t \in \bbr_+},\mathbb{P})$ satisfying the usual conditions a \emph{stochastic basis}. In the sequel, we shall use the abbreviation $\mathbb{B}$ for a stochastic basis $(\Omega,\calf,(\calf_t)_{t \in \bbr_+},\mathbb{P})$, and the abbreviation $\mathbb{B}'$ for another stochastic basis $(\Omega',\calf',(\calf_t')_{t \in \bbr_+},\mathbb{P}')$.

For a sequence $(\beta_k)_{k \in \mathbb{N}}$ of independent Wiener processes we call the sequence
\begin{align*}
W = (\beta_k)_{k \in \mathbb{N}}
\end{align*}
a \emph{standard $\mathbb{R}^{\infty}$-Wiener process}. Given such a process, we fix an orthonormal basis $(e_k)_{k \in \bbn}$ of $U$. Then
\begin{align*}
\sum_{k=1}^{\infty} \beta_k \langle e_k,\cdot \rangle
\end{align*}
is an $U$-valued cylindrical Wiener process, and its covariance operator is given by the identity mapping; see \cite[Remark 7.3]{Riedle}. Hence, we can identify $W$ with the informal expression $\sum_{k=1}^{\infty} \beta_k e_k$. Now, we also fix another separable Hilbert space $\bar{U}$ and a one-to-one Hilbert Schmidt operator $J \in L_2(U,\bar{U})$. Then the process
\begin{align*}
\bar{W} := \sum_{k=1}^{\infty} \beta_k J e_k
\end{align*}
is an $\bar{U}$-valued trace class Wiener process with covariance operator $Q := J J^* \in L_1(\bar{U})$. We call $\bar{W}$ the $Q$-Wiener process associated to $W$.

\begin{definition}[Martingale solution]\label{def-martingal-sol}
A pair $(X,W)$, where $X$ is an adapted process with paths in $\mathbb{W}(H)$ and $W$ is a standard $\mathbb{R}^{\infty}$-Wiener process on some stochastic basis $\mathbb{B}$, is called a \emph{martingale solution} to (\ref{SPDE}), if we have $\mathbb{P}$-almost surely
\begin{align*}
\int_0^t \| \alpha(s,X) \| ds + \int_0^t \| \sigma(s,X) \|_{L_2(U,H)}^2 ds < \infty \quad \text{for all $t \in \mathbb{R}_+$}
\end{align*}
and $\mathbb{P}$-almost surely
\begin{align*}
X(t) = S_t X(0) + \int_0^t S_{t-s} \alpha(s,X)ds + \int_0^t S_{t-s} \sigma(s,X) dW(s) \quad \text{for all $t \in \mathbb{R}_+$.}
\end{align*}
\end{definition}

\begin{remark}
In other words, a martingale solution to (\ref{SPDE}) exists if there are a stochastic basis $\mathbb{B}$ as well as an adapted, continuous process $X$ and a standard $\mathbb{R}^{\infty}$-Wiener process $W$ defined on this stochastic basis such that $X$ is a mild solution to (\ref{SPDE}). In this case, the triplet $(\bbb,X,W)$ is also called a martingale solution to (\ref{SPDE}). For convenience, in this paper we agree to call the pair $(X,W)$ a martingale solution to the SPDE (\ref{SPDE}).
\end{remark}

\begin{remark}
By the measurability conditions from Assumption~\ref{ass-1}, the processes $\alpha(\bullet,X)$ and $\sigma(\bullet,X)$ from Definition~\ref{def-martingal-sol} are adapted.
\end{remark}

\begin{remark}\label{rem-integral-cylindrical}
The stochastic integral from Definition~\ref{def-martingal-sol} is defined as
\begin{align*}
\int_0^t S_{t-s} \sigma(s,X) dW(s) := \int_0^t S_{t-s} \sigma(s,X) \circ J^{-1} d\bar{W}(s), \quad t \in \bbr_+,
\end{align*}
where $\bar{W}$ denotes the $\bar{U}$-valued $Q$-Wiener process associated to $W$. Concerning this definition, we note that for each linear operator $\Phi \in L(U,H)$ we have $\Phi \in L_2(U,H)$ if and only if $\Phi \circ J^{-1} \in L_2(Q^{1/2}(\bar{U}),H)$, and in this case
\begin{align*}
\| \Phi \circ J^{-1} \|_{L_2(Q^{1/2}(\bar{U}),H)} = \| \Phi \|_{L_2(U,H)}, 
\end{align*}
where the separable Hilbert space $Q^{1/2}(\bar{U})$ is equipped with the inner product
\begin{align*}
\langle u,v \rangle_{Q^{1/2}(\bar{U})} := \langle Q^{-1/2} u, Q^{-1/2} v \rangle_{\bar{U}}, \quad u,v \in Q^{1/2}(\bar{U}).
\end{align*}
For further details, we refer to \cite[Sec. 2.5]{Prevot-Roeckner}.
\end{remark}

\begin{remark}
In Definition \ref{def-martingal-sol} we have followed the convention to speak about \emph{martingale solutions} rather than \emph{weak solutions} in the context of semilinear SPDEs; cf. \cite{Da_Prato} or \cite{Atma-book}.
\end{remark}

\begin{definition}[Pathwise uniqueness, Uniqueness in law, Joint uniqueness in law]
Let $\calp$ be a family of probability measures on $(H,\calb(H))$.
\begin{enumerate}
\item We say that \emph{pathwise uniqueness given $\calp$} holds for (\ref{SPDE}) if for two martingale solutions $(X,W)$ and $(X',W)$ on the same stochastic basis $\mathbb{B}$ and with the same $\mathbb{R}^{\infty}$-Wiener process $W$ such that $\mathbb{P}(X(0) = X'(0)) = 1$ and $\bbp \circ X(0)^{-1} \in \calp$ we have $\bbp$-almost surely $X = X'$.

\item We say that \emph{uniqueness in law given $\calp$} holds for (\ref{SPDE}) if for two martingale solutions $(\bbb,X,W)$ and $(\bbb',X',W')$ such that
\begin{align}\label{u-in-law-1}
\bbp \circ X(0)^{-1} = \bbp' \circ X'(0)^{-1} \in \calp
\end{align}
as measures on $(H,\calb(H))$ we have
\begin{align*}
\bbp \circ X^{-1} = \bbp' \circ (X')^{-1}
\end{align*}
as measures on $(\mathbb{W}(H),\calb(\mathbb{W}(H)))$.

\item We say that \emph{joint uniqueness in law given $\calp$} holds for (\ref{SPDE}) if for two martingale solutions $(\bbb,X,W)$ and $(\bbb',X',W')$  such that (\ref{u-in-law-1}) as measures on $(H,\calb(H))$ we have
\begin{align*}
\bbp \circ (X,\bar{W})^{-1} = \bbp' \circ (X',\bar{W'})^{-1}
\end{align*}
as measures on $(\mathbb{W}(H) \times \mathbb{W}_0(\bar{U}),\calb(\mathbb{W}(H)) \otimes \calb(\mathbb{W}_0(\bar{U})))$.
\end{enumerate}
\end{definition}

Suppose that pathwise uniqueness given $\calp$ holds for (\ref{SPDE}). If $\calp$ has one of the following structures, then we introduce further terminology:
\begin{itemize}
\item If $\calp$ consists of all probability measures on $(H,\calb(H))$, then we say that \emph{pathwise uniqueness} holds for (\ref{SPDE}).

\item If $\calp = \{ \mu \}$ for some probability measure $\mu$ on $(H,\calb(H))$, then we say that \emph{pathwise uniqueness given $\mu$} holds for (\ref{SPDE}).

\item If $\calp = \{ \delta_x : x \in H \}$ consists of all Dirac measures, then we say that \emph{$\delta$-pathwise uniqueness} holds for (\ref{SPDE}).
\end{itemize}
We agree on analogous conventions for the notions \emph{uniqueness in law given} and \emph{joint uniqueness in law}.

\begin{definition}\label{def-cale}
Let $\hat{\cale}(H)$ be the set of maps $F : H \times \mathbb{W}_0(\bar{U}) \rightarrow \mathbb{W}(H)$ such that for every probability measure $\mu$ on $(H,\calb(H))$ there exists a map
\begin{align*}
F_{\mu} : H \times \mathbb{W}_0(\bar{U}) \rightarrow \mathbb{W}(H),
\end{align*}
which is $\overline{\calb(H) \otimes \calb(\mathbb{W}_0(\bar{U}))}^{\mu \otimes \mathbb{P}^Q} / \calb(\mathbb{W}(H))$-measurable, such that for $\mu$-almost all $\xi \in H$ we have
\begin{align*}
F(\xi,w) = F_{\mu}(\xi,w) \quad \text{for $\mathbb{P}^Q$-almost all $w \in \mathbb{W}_0(\bar{U})$.}
\end{align*}
Here $\overline{\calb(H) \otimes \calb(\mathbb{W}_0(\bar{U}))}^{\mu \otimes \mathbb{P}^Q}$ denotes the completion of $\calb(H) \otimes \calb(\mathbb{W}_0(\bar{U}))$ with respect to $\mu \otimes \mathbb{P}^Q$, and $\mathbb{P}^Q$ denotes the distribution of the $Q$-Wiener process $\bar{W}$ on $(\mathbb{W}_0(\bar{U}),\calb(\mathbb{W}_0(\bar{U})))$. Of course, $F_{\mu}$ is $\mu \otimes \mathbb{P}^Q$-almost everywhere uniquely determined.
\end{definition}

\begin{definition}[Mild solution]\label{def-mild-sol}
A martingale solution $(\bbb,X,W)$ to (\ref{SPDE}) is called a \emph{mild solution} if there exists a mapping $F \in \hat{\cale}(H)$ such that the following conditions are satisfied:
\begin{enumerate}
\item For all $\xi \in H$ and $t \in \mathbb{R}_+$ the mapping 
\begin{align*}
\mathbb{W}_0(\bar{U}) \rightarrow \mathbb{W}(H), \quad w \mapsto F(\xi,w)
\end{align*}
is $\overline{\calb_t(\mathbb{W}_0(\bar{U}))}^{\mathbb{P}^Q} / \calb_t(\mathbb{W}(H))$-measurable, where $\overline{\calb_t(\mathbb{W}_0(\bar{U}))}^{\mathbb{P}^Q}$ denotes the completion with respect to $\mathbb{P}^Q$ in $\calb(\mathbb{W}_0(\bar{U}))$.

\item We have $\bbp$-almost surely
\begin{align*}
X = F_{\mathbb{P} \circ X(0)^{-1}} (X(0),\bar{W}).
\end{align*}
\end{enumerate}
\end{definition}

\begin{remark}
Of course, every mild solution $(X,W)$ to the SPDE (\ref{SPDE}) is also a martingale solution.
\end{remark}

\begin{definition}[Existence of a mild solution]\label{def-has-mild-sol}
We say that the SPDE (\ref{SPDE}) has a \emph{mild solution} if there exists a mapping $F \in \hat{\cale}(H)$ such that:
\begin{enumerate}
\item Condition (1) from Definition \ref{def-mild-sol} is fulfilled.

\item For every standard $\mathbb{R}^{\infty}$-Wiener process $W$ on a stochastic basis $\mathbb{B}$ and any $\calf_0$-measurable random variable $\xi : \Omega \rightarrow H$ the pair $(X,W)$, where $X := F_{\bbp \circ \xi^{-1}}(\xi,\bar{W})$, is a martingale solution to (\ref{SPDE}) with $\mathbb{P}(X(0) = \xi) = 1$.
\end{enumerate}
\end{definition}

\begin{remark}
Suppose that the SPDE (\ref{SPDE}) has a mild solution. Then for every standard $\mathbb{R}^{\infty}$-Wiener process $W$ on a stochastic basis $\mathbb{B}$ and any $\calf_0$-measurable random variable $\xi : \Omega \rightarrow H$ the pair $(X,W)$, where $X := F_{\bbp \circ \xi^{-1}}(\xi,\bar{W})$, is a mild solution in the sense of Definition \ref{def-mild-sol}.
\end{remark}

\begin{definition}[Existence of a unique mild solution]\label{def-unique-sol}
We say that the SPDE (\ref{SPDE}) has a \emph{unique mild solution} if there exists a mapping $F \in \hat{\cale}(H)$ such that:
\begin{enumerate}
\item Condition (1) from Definition \ref{def-mild-sol} is fulfilled.

\item Condition (2) from Definition \ref{def-has-mild-sol} is fulfilled.

\item For any martingale solution $(X,W)$ to (\ref{SPDE}) we have $\bbp$-almost surely
\begin{align*}
X = F_{\mathbb{P} \circ X(0)^{-1}}(X(0),\bar{W}).
\end{align*}
\end{enumerate}
\end{definition}

\begin{remark}
Of course, if the SPDE (\ref{SPDE}) has a unique mild solution in the sense of Definition \ref{def-unique-sol}, then it also has a mild solution in the sense of Definition \ref{def-has-mild-sol}.
\end{remark}

\begin{remark}\label{rem-unique-mild-implies-joint}
If the SPDE (\ref{SPDE}) has a unique mild solution, then joint uniqueness in law for (\ref{SPDE}) holds, because for every martingale solution $(X,W)$ we have $\bbp$-almost surely
\begin{align*}
(X,\bar{W}) = \big( F_{\mathbb{P} \circ X(0)^{-1}}(X(0),\bar{W}), \bar{W} \big).
\end{align*}
\end{remark}

\begin{remark}\label{rem-sol-SDEs}
For $A = 0$ the SPDE (\ref{SPDE}) is rather a SDE, and in this case, we speak about \emph{weak solutions} rather than \emph{martingale solutions}, and we speak about \emph{strong solutions} rather than \emph{mild solutions}.
\end{remark}

\section{Infinite dimensional stochastic differential equations}\label{sec-SDEs}

In this section we provide the required results about infinite dimensional SDEs. Let $\calh$ be a separable Hilbert space, and consider the $\calh$-valued SDE (\ref{SDE}). Here $\bar{\alpha} : \mathbb{R}_+ \times \mathbb{W}(\calh) \rightarrow \calh$ and $\bar{\sigma} : \mathbb{R}_+ \times \mathbb{W}(H) \rightarrow L_2(U,H)$ are mappings satisfying the corresponding conditions from Assumption \ref{ass-1}. Then we are in the framework of the variational approach; see \cite[Example 4.1.3]{Prevot-Roeckner}. Furthermore, for a separable Hilbert space $H$ the Fr\'{e}chet space $\bbw(H)$ coincides with the Fr\'{e}chet space $\bbb$ defined in \cite{Rehmeier}, and their metrics induce the same topology, because for each $k \in \bbn$ and each $w \in \bbw(H)$ we have
\begin{align*}
\int_0^k \| w(t) \| dt + \sup_{t \in [0,k]} \| w(t) \| \leq (k+1) \sup_{t \in [0,k]} \| w(t) \|.
\end{align*}
Concerning the upcoming solution and uniqueness concepts we refer to Section \ref{sec-framework}, and in particular to Remark \ref{rem-sol-SDEs} regarding the terminology about weak and strong solutions.

\begin{theorem}\label{thm-1-Y}
Let $\nu$ be a probability measure on $(\calh,\calb(\calh))$. Suppose there exists a weak solution $(Y,W)$ to the SDE (\ref{SDE}) such that $\nu$ is the distribution of $X(0)$, and that joint uniqueness in law given $\nu$ holds for (\ref{SDE}). Then pathwise uniqueness given $\nu$ holds for (\ref{SDE}) as well.
\end{theorem}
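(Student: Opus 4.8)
The plan is to deduce this from the dual Yamada-Watanabe theorem for stochastic differential equations in the variational framework, as established in \cite{Rehmeier}; the task is therefore essentially to verify that the SDE (\ref{SDE}), together with the solution and uniqueness concepts from Section \ref{sec-framework}, fits that framework. First I would record that, since $\bar{\alpha}$ and $\bar{\sigma}$ satisfy the analogues of Assumption \ref{ass-1}, the equation (\ref{SDE}) on the separable Hilbert space $\calh$ is of the path-dependent, progressively measurable form treated in \cite[Example 4.1.3]{Prevot-Roeckner} and in \cite{Rehmeier}. The path-space identification has already been noted above: $\bbw(\calh)$ coincides, as a Fr\'echet space inducing the same topology, with the space used in \cite{Rehmeier}, so that the Borel structures carrying all the relevant laws agree.

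Next I would match the probabilistic objects. A standard $\bbr^{\infty}$-Wiener process $W = (\beta_k)_{k \in \bbn}$ is identified, via the fixed one-to-one Hilbert-Schmidt operator $J \in L_2(U,\bar{U})$, with the $\bar{U}$-valued trace class $Q$-Wiener process $\bar{W} = \sum_{k} \beta_k J e_k$ of covariance $Q = JJ^*$, and the stochastic integral in (\ref{SDE}) is the one described in Remark \ref{rem-integral-cylindrical}. Under this identification, a weak solution $(Y,W)$ in the sense of Section \ref{sec-framework} is exactly a weak solution in the sense of \cite{Rehmeier}; pathwise uniqueness given $\nu$ becomes the comparison of two solutions on one stochastic basis sharing the same $\bar{W}$ and the same initial law $\nu$; and joint uniqueness in law given $\nu$ is precisely the equality of the laws of $(Y,\bar{W})$ on $\bbw(\calh) \times \bbw_0(\bar{U})$ used there. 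Once these dictionaries are in place, the hypotheses of the theorem---existence of a weak solution with initial law $\nu$ together with joint uniqueness in law given $\nu$---are exactly the hypotheses of the dual result in \cite{Rehmeier}, whose conclusion is pathwise uniqueness given $\nu$.

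For completeness, the substantive content that \cite{Rehmeier} supplies, and which I would not reprove, is the following mechanism. Given two weak solutions sharing the driving noise $\bar{W}$ and the initial value, one disintegrates the joint law of $(Y,\bar{W})$ to obtain a regular conditional distribution $K(\bar{w},\cdot)$ of $Y$ given $\bar{W} = \bar{w}$; joint uniqueness in law forces the conditional law of $Y'$ to coincide with $K(\bar{w},\cdot)$ for $\bbp^Q$-almost all $\bar{w}$. Passing to an enlarged basis carrying two conditionally independent copies driven by the same $\bar{W}$, a symmetry argument built on joint uniqueness in law forces $K(\bar{w},\cdot)$ to be a Dirac measure for $\bbp^Q$-almost all $\bar{w}$, and this point-mass property is exactly pathwise uniqueness.

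I expect the main obstacle to lie not in this disintegration argument but in the faithful translation of conventions. Concretely, I would need the adaptedness and measurability requirements imposed on weak solutions here (via Assumption \ref{ass-1} and Definition \ref{def-martingal-sol}, read with $A = 0$ as in Remark \ref{rem-sol-SDEs}) to coincide with those of \cite{Rehmeier}, and, above all, the passage between the cylindrical noise $W$ and the trace class noise $\bar{W}$ to be compatible both with the integration theory of Remark \ref{rem-integral-cylindrical} and with the restriction to a single prescribed initial law $\nu$. If \cite{Rehmeier} were stated for a family of initial laws rather than for one fixed $\nu$, one would simply localize the statement to the singleton family $\{\nu\}$, which is harmless since all hypotheses and the conclusion are formulated ``given $\nu$''.
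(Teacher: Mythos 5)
There is a genuine gap, and it sits exactly at the point you identified as ``the substantive content that \cite{Rehmeier} supplies''. The mechanism you describe --- disintegrate the law of $(Y,\bar{W})$ into a kernel $K(\bar{w},\cdot)$, then force $K(\bar{w},\cdot)$ to be Dirac by a symmetry argument with two conditionally independent copies ``built on joint uniqueness in law'' --- is not Rehmeier's argument and does not work: joint uniqueness in law concerns only the pair $(Y,\bar{W})$ and gives no control over the triple $(Y,Y',\bar{W})$ formed from two conditionally independent copies. (The conditional-independence symmetry argument is the Yamada--Watanabe direction, where \emph{pathwise uniqueness} is the input that collapses the product kernel $K \otimes K$ to a diagonal.) Indeed, under your literal reading of the hypothesis as mere weak existence, the statement is false: for Tanaka's equation with deterministic initial value --- and likewise for the paper's own infinite-dimensional version (\ref{SPDE-Tanaka}) in Section \ref{sec-examples} --- weak existence and joint uniqueness in law hold (for Tanaka-type equations the driving noise is a measurable functional of the solution, so uniqueness in law already pins down the joint law), yet $(-X,W)$ violates pathwise uniqueness. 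It is precisely this failure that the paper exploits in Section \ref{sec-examples} to conclude, via Theorem \ref{thm-1}, that (\ref{SPDE-Tanaka}) has \emph{no} mild solution.

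What makes the theorem true is that the assumed solution is not merely weak: as the paper's proof of Theorem \ref{thm-1-Y} clarifies (the wording of the statement is loose here), the hypothesis is a solution in the sense of Definition \ref{def-mild-sol} with initial law $\nu$, i.e.\ $Y = F_{\nu}(Y(0),\bar{W})$ $\bbp$-a.s.\ for some $F \in \hat{\cale}(\calh)$ with the adaptedness property (1) --- and this is exactly what Proposition \ref{prop-1-a} produces when the theorem is later applied. The strong solution is what forces the Dirac property: joint uniqueness in law given $\nu$ identifies the law of $(Y',\bar{W'})$ for \emph{any} solution with initial law $\nu$ with that of $(F_{\nu}(\xi,\bar{W}),\bar{W})$, whose conditional law given the noise is a point mass, whence $Y' = F_{\nu}(Y'(0),\bar{W'})$ a.s.\ and pathwise uniqueness follows. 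The genuinely subtle point in the paper's proof is also different from the translation issues you flag: \cite[Thm.~3.1]{Rehmeier} assumes strong existence in the universal sense of Definition \ref{def-has-mild-sol} (a functional $F$ producing solutions for every basis, noise and initial condition), whereas here only one solution, for the single law $\nu$, satisfying Definition \ref{def-mild-sol} is available; the paper's proof consists in noting that a careful inspection of Rehmeier's argument shows this weaker hypothesis suffices. Your framework-matching preliminaries (path space, identification of $W$ with $\bar{W}$ via $J$, localization to the singleton family $\{\nu\}$) are fine and agree with the paper, but without the strong-solution hypothesis and the correct Cherny-type mechanism the proof does not go through.
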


\begin{proof}
This is a consequence of \cite[Thm. 3.1]{Rehmeier}, but we have to comment on a subtle detail. Note that in the latter result it is assumed that the SDE (\ref{SDE}) has a strong solution in the sense of Definition \ref{def-has-mild-sol}, whereas in the present result we merely assume that the SDE (\ref{SDE}) has a weak solution $(Y,W)$ in the sense of Definition \ref{def-mild-sol} such that $\nu$ is the distribution of $X(0)$. A careful inspection of the proof of \cite[Thm. 3.1]{Rehmeier} shows that the result also holds true in the form stated here.
\end{proof}

\begin{theorem}\cite[Thm. 3.2]{Rehmeier}\label{thm-2-Y}
For every $y \in \calh$ the following statements are equivalent:
\begin{enumerate}
\item[(i)] Uniqueness in law given $\delta_y$ holds for the SDE (\ref{SDE}).

\item[(ii)] Joint uniqueness in law given $\delta_y$ holds for the SDE (\ref{SDE}).
\end{enumerate}
\end{theorem}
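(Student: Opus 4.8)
The implication (ii) $\Rightarrow$ (i) is immediate: the law of $Y$ on $\bbw(\calh)$ is the first marginal of the law of $(Y,\bar{W})$ on $\bbw(\calh) \times \bbw_0(\bar{U})$, so uniqueness of the latter forces uniqueness of the former. The whole content lies in the converse (i) $\Rightarrow$ (ii), and my plan is to follow the strategy of Cherny \cite{Cherny}, transported to the present infinite-dimensional, path-dependent situation as in \cite{Rehmeier}.

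Fix two weak solutions $(\bbb, Y, W)$ and $(\bbb', Y', W')$ to (\ref{SDE}) with initial law $\delta_y$, so that $Y(0) = Y'(0) = y$ almost surely. By hypothesis (i) the laws $P_Y := \bbp \circ Y^{-1}$ and $P_{Y'} := \bbp' \circ (Y')^{-1}$ coincide on $\bbw(\calh)$. Writing $\Theta := \bbp \circ (Y,\bar{W})^{-1}$ and $\Theta' := \bbp' \circ (Y',\bar{W'})^{-1}$ for the two joint laws on the Polish space $\bbw(\calh) \times \bbw_0(\bar{U})$, I would disintegrate each along its first marginal,
\begin{align*}
\Theta(dw, d\eta) = P_Y(dw)\, K(w, d\eta), \qquad \Theta'(dw, d\eta) = P_Y(dw)\, K'(w, d\eta),
\end{align*}
using regular conditional distributions $K, K'$, which exist on the path space. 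Since the first marginals already agree, joint uniqueness in law reduces to the assertion that $K(w, \cdot) = K'(w, \cdot)$ for $P_Y$-almost every $w$; that is, the conditional law of the driving $Q$-Wiener process given the solution path must be the same for both solutions.

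The structural reason this should hold is as follows. Given $Y = w$, the ``martingale part'' $M_w(t) := w(t) - y - \int_0^t \bar{\alpha}(s, w)\,ds$ is a fixed functional of $w$, and along any solution it equals $\int_0^t \bar{\sigma}(s,w) \circ J^{-1}\, d\bar{W}(s)$ by Remark~\ref{rem-integral-cylindrical}. Thus the part of $\bar{W}$ lying in the orthogonal complement of $\ker(\bar{\sigma}(s,w) \circ J^{-1})$ is measurably recovered from $w$ through $M_w$, whereas the part lying in the kernel is invisible, entering neither $M_w$ nor $w$. Conditionally on $w$, this invisible component ought to be a Gaussian contribution whose law is dictated solely by $Q$ and by the predictable projection onto the kernel of $\bar{\sigma}(s,w) \circ J^{-1}$, hence the same for every solution. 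Granting this, $K = K'$ holds $P_Y$-almost surely, and the factorization above yields $\Theta = \Theta'$, which is joint uniqueness in law given $\delta_y$.

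The main obstacle is precisely the rigorous justification of this last claim, i.e.\ the solution-independence of the conditional law of the noise given the solution path. I expect the difficulty to lie in setting up the orthogonal decomposition of the cylindrical noise $W$ along the random, time-dependent range and kernel of $\bar{\sigma}(s,Y) \circ J^{-1}$ by means of predictable projections, in identifying the kernel-direction part as a continuous local martingale with the correct universal conditional quadratic variation so that a L\'{e}vy-type characterisation applies, and in upgrading orthogonality to conditional independence of the invisible noise from $Y$. This is the infinite-dimensional, path-dependent counterpart of Cherny's lemma, and carrying it out carefully — in particular handling the degeneracy of $\bar{\sigma}$ and the path-dependence of the coefficients — is where the real work resides.
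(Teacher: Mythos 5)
First, a point of comparison: the paper does not prove this statement at all --- the theorem is quoted verbatim from \cite[Thm.~3.2]{Rehmeier}, so you are attempting more than the text itself does. Within your attempt, the easy direction (ii) $\Rightarrow$ (i) by taking marginals is correct, and the disintegration correctly reformulates (i) $\Rightarrow$ (ii) as the assertion that the conditional law of $\bar{W}$ given the solution path is the same for every weak solution with initial law $\delta_y$. But that reformulation is essentially contentless, and what you offer for the key claim --- that conditionally on the solution the ``invisible'' component of the noise ``ought to be a Gaussian contribution whose law is dictated solely by $Q$'' and the projection --- \emph{is} the whole theorem, asserted rather than proved. You acknowledge this candidly, so the proposal is a correct roadmap with the central lemma missing, not a proof.

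Two concrete remarks on why the gap sits exactly where you placed it and why your proposed route would need modification. (a) Your claim that the visible part $\int_0^{\bullet} P(s)\, d\bar{W}(s)$, with $P(s)$ the projection onto $\ker(\bar{\sigma}(s,Y) \circ J^{-1})^{\perp}$, is ``measurably recovered from $w$ through $M_w$'' is not literally true pathwise: the recovery goes through stochastic integration of a measurably selected pseudo-inverse of $\bar{\sigma} \circ J^{-1}$ against the martingale part $M$, and stochastic integrals are not Borel functionals of the paths. One needs an approximation of the integral by measurable path functionals converging almost surely --- precisely the role of Theorem~\ref{thm-integral} in the present paper and of its analogue in \cite{Rehmeier} --- and the measurable selection of the pseudo-inverse of the random, time-dependent operator is itself a nontrivial point you subsume under ``predictable projections.'' (b) Your plan to ``upgrade orthogonality to conditional independence'' via a conditional L\'{e}vy-type characterization conditions on the entire, non-adapted path of $Y$, and orthogonality of continuous local martingales does not in general yield conditional independence; a direct conditioning argument of this kind is exactly what is hard to make rigorous. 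The argument of Cherny \cite{Cherny}, carried out in infinite dimensions in \cite{Rehmeier}, proceeds differently: one enlarges the stochastic basis by an independent standard $\bbr^{\infty}$-Wiener process $\beta$, replaces the invisible component by forming (in the appropriate cylindrical sense) $\tilde{W} := \int_0^{\bullet} P \, dW + \int_0^{\bullet} (\Id - P) \, d\beta$, verifies via the L\'{e}vy characterization with respect to the enlarged filtration that $\tilde{W}$ is again a standard Wiener process which still drives $Y$ (since $\bar{\sigma}(\Id - P) = 0$), and then argues that the joint law of $Y$ with the modified noise is a functional of the law of $Y$ alone, because $\beta$ is independent by construction and the visible part is recovered as in (a). Identifying the law of the original pair with that of the modified pair is the remaining delicate step, and reconstructing these lemmas --- not the disintegration bookkeeping --- is the actual content of the theorem your sketch leaves open.
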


We also require the following result about almost sure approximations of the It\^{o} integral. Let
\begin{align*}
b : \bbr_+ \times \bbw(H) \to L_2(U,\calh)
\end{align*}
be such that the corresponding conditions from Assumption \ref{ass-1} are fulfilled. We define
\begin{align*}
\bar{b} : \bbr_+ \times \bbw(H) \to L_2(Q^{1/2}(\bar{U}),\calh), \quad \bar{b}(t,x) := b(t,x) \circ J^{-1}.
\end{align*}
For each $j \in \bbn$ we introduce
\begin{align*}
\tilde{b}_j : \bbr_+ \times \bbw(H) \to L_2(Q^{1/2}(\bar{U}),\calh), \quad \tilde{b}_j(t,x) := \bar{b}(t,x) \bbI_{\big\{ \| \bar{b}(t,x) \|_{L_2(Q^{1/2}(\bar{U}),\calh)} \leq j \big\}},
\end{align*}
and for all $j,k \in \bbn$ we set
\begin{align*}
\tilde{b}_{j,k} : \bbr_+ \times \bbw(H) \to L(\bar{U},\calh), \quad \tilde{b}_{j,k}(t,x) := \sum_{i=1}^k \tilde{b}_{j}(t,x)(f_i) \langle f_i,\cdot \rangle_{\bar{U}},
\end{align*}
where $( f_i )_{i \in \bbn}$ is an orthonormal basis of $\bar{U}$ consisting of eigenvectors of $Q$. Furthermore, for all $j,k,\ell \in \bbn$ we define
\begin{align*}
\tilde{b}_{j,k,\ell} : \bbr_+ \times \bbw(H) \to L(\bar{U},\calh),\calh), \quad \tilde{b}_{j,k,\ell}(t,x) := \ell \int_{t-\frac{1}{\ell}}^t \tilde{b}_{j,k}(s,x)ds,
\end{align*}
and for all $j,k,\ell,m \in \bbn$ we define
\begin{align*}
\tilde{b}_{j,k,\ell,m} : \bbr_+ \times \bbw(H) \to L(\bar{U},\calh), \quad \tilde{b}_{j,k,\ell,m}(t,x) := \tilde{b}_{j,k,\ell} \bigg( \frac{[mt]}{m},x \bigg).
\end{align*}

\begin{theorem}\label{thm-integral}
Let $X$ be an adapted process with paths in $\bbw(H)$, and let $W$ be a standard $\bbr^{\infty}$-Wiener process on some stochastic basis $\bbb$ such that $\mathbb{P}$-almost surely
\begin{align*}
\int_0^t \| b(s,X) \|_{L_2(U,\calh)}^2 ds < \infty \quad \text{for all $t \in \mathbb{R}_+$.}
\end{align*}
Then for each $T \in \bbn$ there is an array of subsequences $\{ j_{T,n},k_{T,n},\ell_{T,n},m_{T,n} \}_{n \in \bbn}$ such that $I^n(X,\bar{W}) \overset{\text{a.s.}}{\to} I$ in $\bbw(\calh)$, where
\begin{align}\label{J-def}
I := \int_0^{\bullet} b(s,X) dW(s),
\end{align}
and where for all $(x,w) \in \bbw(H) \times \bbw_0(\bar{U})$ the sequence $(I^n(x,w))_{n \in \bbn}$ is defined as
\begin{equation}\label{Jn-def}
\begin{aligned}
I^n(x,w) &:= \sum_{T=1}^{\infty} \sum_{i=1}^{\infty} \tilde{b}_{j_{T,n},k_{T,n},\ell_{T,n},m_{T,n}} \bigg( \frac{i-1}{m_{T,n}}, x \bigg) \big( w^{i/m_{T,n}} - w^{(i-1)/m_{T,n}} \big)
\\ &\qquad\qquad\quad \bbI_{\{ (i-1)/m_{T,n} < T \leq i/m_{T,n} \}}, \quad n \in \bbn. 
\end{aligned}
\end{equation}
\end{theorem}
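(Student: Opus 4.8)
The plan is to realize the It\^o integral $I$ from \eqref{J-def} as an almost sure limit of the explicit path functionals $I^n(X,\bar{W})$ by approximating the integrand in the four successive steps encoded by the indices $j,k,\ell,m$, and then to upgrade the resulting convergence in probability to almost sure convergence by a diagonal extraction. Throughout I would pass to the $Q$-Wiener process $\bar{W}$ and the radonified integrand $\bar{b}=b\circ J^{-1}$, so that by Remark~\ref{rem-integral-cylindrical} we have $I=\int_0^{\bullet}\bar{b}(s,X)\,d\bar{W}(s)$ and the It\^o isometry is available in $L_2(Q^{1/2}(\bar{U}),\calh)$. It suffices to prove, for each fixed horizon $T\in\bbn$, convergence in probability uniformly on $[0,T]$, and then to splice the resulting $T$-tuned families together to obtain convergence in the Fr\'echet space $\bbw(\calh)$.

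I would first establish, in this order, four convergences of the associated stochastic integrals, each holding in probability and uniformly on $[0,T]$ via Doob's maximal inequality, the It\^o isometry, and dominated convergence for stochastic integrals. As $j\to\infty$, the truncation $\tilde{b}_j(\bullet,X)$ converges pointwise to $\bar{b}(\bullet,X)$ and is dominated by it, the hypothesis $\int_0^t\|b(s,X)\|_{L_2(U,\calh)}^2\,ds<\infty$ furnishing the dominating function, so the integrals converge to $I$. As $k\to\infty$, the finite-rank truncation onto the first $k$ eigenvectors of $Q$ converges to $\tilde{b}_j$ in $L_2(Q^{1/2}(\bar{U}),\calh)$ for each $(t,\omega)$, dominated by the bounded integrand $\tilde{b}_j$. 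As $\ell\to\infty$, the backward average $\tilde{b}_{j,k,\ell}=\ell\int_{\bullet-1/\ell}^{\bullet}\tilde{b}_{j,k}\,ds$ converges to $\tilde{b}_{j,k}$ for almost every $t$ by the Lebesgue differentiation theorem; crucially, the backward averaging preserves adaptedness, so all integrals remain well defined. Finally, as $m\to\infty$, the grid evaluation $t\mapsto\tilde{b}_{j,k,\ell}([mt]/m,X)$ converges to $\tilde{b}_{j,k,\ell}(t,X)$ because $\tilde{b}_{j,k,\ell}$ is continuous in $t$, yielding convergence of the integrals.

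The decisive structural observation is that, for the fully discretized integrand $\tilde{b}_{j,k,\ell,m}$, which is a simple, finite-rank, $L(\bar{U},\calh)$-valued process constant on the grid $\{i/m\}$, the stochastic integral is literally a finite sum of increments,
\begin{align*}
\int_0^t \tilde{b}_{j,k,\ell,m}(s,X)\,d\bar{W}(s) = \sum_{i} \tilde{b}_{j,k,\ell,m}\Big(\tfrac{i-1}{m},X\Big)\big(\bar{W}^{(i/m)\wedge t}-\bar{W}^{(i-1)/m\wedge t}\big)\quad\text{a.s.},
\end{align*}
so it agrees almost surely with the pathwise expression in \eqref{Jn-def} evaluated at $(X,\bar{W})$. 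This is exactly what makes $I^n$ a deterministic, measurable map on $\bbw(H)\times\bbw_0(\bar{U})$ rather than a genuinely stochastic object, and it is the reason the four discretizations were introduced in the first place.

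It then remains to assemble the limits. For fixed $T$, the four iterated convergences in probability permit a diagonal choice $(j_{T,n},k_{T,n},\ell_{T,n},m_{T,n})_{n}$ along which the discretized integrals converge in probability, uniformly on $[0,T]$, to $I$; a further subsequence gives almost sure convergence. To pass to locally uniform convergence on all of $\bbr_+$ simultaneously I would run this extraction for every $T\in\bbn$ and glue the $T$-tuned families, which is the role of the outer sum over $T$ and the indicator $\bbI_{\{(i-1)/m_{T,n}<T\le i/m_{T,n}\}}$ in \eqref{Jn-def}, closing with a final diagonal argument over $T$. I expect the main obstacle to be precisely this bookkeeping: since every approximation step and the union over $T$ only deliver convergence in probability, the subsequence extractions must be organized so that a \emph{single} array produces almost sure, locally uniform convergence for all $T$ at once, while the spliced functional still reproduces the stochastic integral on each $[0,T]$ and remains measurable in $(x,w)$. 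The analytic estimates within each individual step are routine consequences of the It\^o isometry and dominated convergence.
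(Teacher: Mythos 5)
Your proposal is correct and is essentially the paper's own argument: the paper disposes of this theorem by reference, stating that the proof is analogous to \cite[Thm.~3.1]{Chow} (taking into account statement (b) on page 29 of \cite{Atma-book}), and the scheme you describe --- the successive approximations in $j$ (truncation), $k$ (finite-rank projection along the eigenbasis of $Q$), $\ell$ (backward time-averaging, preserving adaptedness) and $m$ (grid discretization), each yielding convergence in probability uniformly on compacts, the a.s.\ identification of the fully discretized integral with the pathwise sum (\ref{Jn-def}), and the diagonal subsequence extraction spliced over the horizons $T \in \bbn$ --- is precisely the argument of that reference adapted to the present setting. One small point to make explicit in a write-up: since only $\mathbb{P}$-a.s.\ finiteness of $\int_0^t \| b(s,X) \|_{L_2(U,\calh)}^2\,ds$ is assumed, the maximal estimates must be localized (e.g.\ by stopping at $\inf\{ t : \int_0^t \| \bar{b}(s,X) \|_{L_2(Q^{1/2}(\bar{U}),\calh)}^2\,ds \geq R \}$ or via Lenglart's inequality), which your appeal to Doob's inequality and the It\^{o} isometry implicitly presupposes.
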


\begin{proof}
Taking into account statement (b) on page 29 in \cite{Atma-book}, the proof analogous to that of \cite[Thm. 3.1]{Chow}.
\end{proof}

\section{Proofs of the main results}\label{sec-proof}

In this section we provide the proofs of Theorems \ref{thm-1}, \ref{thm-2} and \ref{thm-3}. The general framework is that of Section~\ref{sec-framework}. In particular, we suppose that the coefficients $\alpha$ and $\sigma$ satisfy Assumption~\ref{ass-1}. In order to use the mentioned ``method of the moving frame'' from \cite{SPDE}, we require the following assumption on the semigroup $(S_t)_{t \geq 0}$.

\begin{assumption}\label{ass-2}
We suppose there exist another separable Hilbert space $\calh$, a $C_0$-group $(U_t)_{t \in \mathbb{R}}$ on $\calh$ and an isometric embedding $\ell \in L(H,\calh)$ such that the diagram
\[ \begin{CD}
\calh @>U_t>> \calh\\
@AA\ell A @VV\pi V\\
H @>S_t>> H
\end{CD} \]
commutes for every $t \in \mathbb{R}_+$, that is
\begin{align}\label{diagram-commutes}
\pi U_t \ell = S_t \quad \text{for all $t \in \mathbb{R}_+$,}
\end{align}
where $\pi := \ell^*$ is the orthogonal projection from $\calh$ into $H$.
\end{assumption}

\begin{remark}\label{rem-pseudo-contractive}
According to \cite[Prop. 8.7]{SPDE}, this assumption is satisfied if the semigroup $(S_t)_{t \geq 0}$ is \emph{pseudo-contractive}\footnote{The notion \emph{quasi-contractive} is also used in the literature.}, that is, there is a constant $\omega \in \mathbb{R}$ such that
\begin{align*}
\| S_t \| \leq e^{\omega t} \quad \text{for all $t \geq 0$.}
\end{align*}
This result relies on the Sz\H{o}kefalvi-Nagy theorem on unitary dilations (see e.g. \cite[Thm. I.8.1]{Nagy}, or \cite[Sec. 7.2]{Davies}). In the spirit of \cite{Nagy}, the group $(U_t)_{t \in \mathbb{R}}$ is called a \emph{dilation} of the semigroup $(S_t)_{t \geq 0}$.
\end{remark}

Note that $\ran(\ell)$ is a closed subspace of $\calh$, and that ${\rm ker}(\pi) = {\ran} (\ell)^{\perp}$. Therefore, we have the direct sum decomposition $\calh = \calh_1 \oplus \calh_2$, where $\calh_1 = \ran(\ell)$ and $\calh_2 = \ker(\pi)$. Furthermore, $\pi \ell$ is the identity operator and $\ell \pi$ is the orthogonal projection on $\calh_1$.  We define the continuous linear mapping $U : \bbw(\calh) \to \bbw(\calh)$ as
\begin{align*}
(Uw)(t) := U_t w(t), \quad t \in \bbr_+.
\end{align*}
Then $U$ is a linear isomorphism with inverse $U^{-1} : \bbw(\calh) \to \bbw(\calh)$ given by
\begin{align*}
(U^{-1}w)(t) := U_{-t} w(t), \quad t \in \bbr_+.
\end{align*}
Moreover, we define the continuous linear mapping
\begin{align*}
\Gamma : \mathbb{W}(\calh) \rightarrow \mathbb{W}(H), \quad \Gamma(w) := \pi U (w - \Pi_{\calh_2} w(0)),
\end{align*}
where $\Pi_{\calh_2} : \calh \to \calh_2$ denotes the orthogonal projection on $\calh_2$.

\begin{remark}\label{rem-start-in-range}
For each $w \in \mathbb{W}(\calh)$ with $w(0) \in \calh_1$ we have $\Gamma(w) = \pi U w$.
\end{remark}

\begin{lemma}\label{lemma-ran-ker-Gamma}
We have $\ran(\Gamma) = \bbw(H)$ and $\ker(\Gamma) = \bbw(\calh_2)$.
\end{lemma}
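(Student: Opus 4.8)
The plan is to prove the two identities separately: the statement about the range by exhibiting an explicit preimage, and the statement about the kernel by a direct computation that reduces everything to the commuting relation \eqref{diagram-commutes} together with an invariance property of $\calh_2$ under the group $(U_t)_{t \geq 0}$. For $\ran(\Gamma) = \bbw(H)$ the inclusion $\ran(\Gamma) \subseteq \bbw(H)$ is immediate, since $\pi$ maps $\calh$ into $H$. For the converse, given $v \in \bbw(H)$ I would set $w := U^{-1}(\ell \circ v)$, that is $w(t) := U_{-t}\, \ell\, v(t)$, which lies in $\bbw(\calh)$ because $U^{-1}$ is continuous. Since $w(0) = \ell\, v(0) \in \ran(\ell) = \calh_1$, Remark \ref{rem-start-in-range} gives $\Gamma(w) = \pi U w$, and hence
\[
\Gamma(w)(t) = \pi U_t U_{-t}\, \ell\, v(t) = \pi \ell\, v(t) = v(t), \qquad t \in \bbr_+,
\]
using that $(U_t)_{t \in \bbr}$ is a group and $\pi \ell = \Id_H$. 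Thus $\Gamma(w) = v$, which proves $\bbw(H) \subseteq \ran(\Gamma)$.

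For the kernel I would first isolate the key structural fact $U_t \calh_2 \subseteq \calh_2$ for all $t \geq 0$, equivalently that $\pi U_t$ annihilates $\calh_2$. For the dilation underlying \cite{SPDE} this can be read off from \eqref{diagram-commutes} by taking adjoints: $\pi U_{-t} \ell = S_t^*$ forces $U_{-t}\, \ell\, x = \ell\, S_t^* x \in \calh_1$ for all $x \in H$, i.e. $U_{-t}\calh_1 \subseteq \calh_1$, which upon passing to orthogonal complements is precisely $U_t \calh_2 \subseteq \calh_2$. Granting this, the inclusion $\bbw(\calh_2) \subseteq \ker(\Gamma)$ is easy: for $w \in \bbw(\calh_2)$ we have $\Pi_{\calh_2} w(0) = w(0)$, so $\Gamma(w)(t) = \pi U_t(w(t) - w(0))$ with $w(t) - w(0) \in \calh_2$, and the invariance gives $\Gamma(w) = 0$.

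For the reverse inclusion $\ker(\Gamma) \subseteq \bbw(\calh_2)$ I would evaluate at $t = 0$: from $\Gamma(w)(0) = \pi(w(0) - \Pi_{\calh_2} w(0)) = \pi w(0)$ one obtains $w(0) \in \ker(\pi) = \calh_2$, hence $\Pi_{\calh_2} w(0) = w(0)$. Then $\Gamma(w) = 0$ reads $\pi U_t(w(t) - w(0)) = 0$, i.e. $U_t(w(t) - w(0)) \in \calh_2$; combining this with $U_t w(0) \in \calh_2$ (forward invariance) yields $U_t w(t) \in \calh_2$ for every $t$, so that $U w \in \bbw(\calh_2)$. In other words, the computation delivers $\ker(\Gamma) = \{ w \in \bbw(\calh) : U w \in \bbw(\calh_2) \}$.

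I expect the last step to be the main obstacle: passing from $U_t w(t) \in \calh_2$ back to $w(t) \in \calh_2$ requires the reverse invariance $U_{-t}\calh_2 \subseteq \calh_2$, which is not a consequence of the commuting diagram \eqref{diagram-commutes} by itself and must instead be extracted from the explicit construction of the group in \cite[Prop. 8.7]{SPDE}. Concretely, one has to exclude paths whose $H$-component $\pi w(t)$ drifts into $\ker(S_t)$ while $U_t w(t)$ nonetheless stays in $\calh_2$; ruling this out is equivalent to $U$ restricting to a bijection of $\bbw(\calh_2)$. Verifying that the moving-frame dilation genuinely provides this two-sided invariance of $\calh_2$, so that the identity $\ker(\Gamma) = \bbw(\calh_2)$ really holds, is the delicate point I would need to pin down with care.
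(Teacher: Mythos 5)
Your proof of $\ran(\Gamma) = \bbw(H)$ is correct and is essentially verbatim the paper's argument (set $w := U^{-1} \ell v$, note $w(0) = \ell v(0) \in \calh_1$, hence $\Gamma(w) = \pi U U^{-1} \ell v = v$). The kernel half, however, contains a genuine gap, and it occurs earlier than the point you flag. The step ``$\pi U_{-t} \ell = S_t^*$ forces $U_{-t}\, \ell\, x = \ell\, S_t^* x \in \calh_1$'' is a non sequitur: the adjoint identity only says that the $\calh_1$-\emph{component} of $U_{-t} \ell x$ equals $\ell S_t^* x$; it gives no control whatsoever on the $\calh_2$-component. A compression identity never yields invariance by itself --- which is exactly the objection you correctly raise against the \emph{reverse} invariance in your last paragraph, and it applies word for word to your own derivation of the \emph{forward} invariance. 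Indeed, if $(U_t)_{t \in \bbr}$ is unitary and $U_{-t} \ell(H) \subseteq \calh_1$ for all $t \geq 0$, then $\| S_t^* x \| = \| U_{-t} \ell x \| = \| x \|$, so $S_t S_t^* = \Id_H$ for all $t$; hence the ``structural fact'' $U_t \calh_2 \subseteq \calh_2$ fails for \emph{every} unitary dilation of any semigroup that is not a co-isometry, e.g.\ for the diagonal contraction semigroup $S_t h = (e^{-kt} h_k)_{k \in \bbn}$ of Section \ref{sec-examples}. A minimal illustration that under Assumption \ref{ass-2} alone even the inclusion $\bbw(\calh_2) \subseteq \ker(\Gamma)$ can fail: take $H = \bbr$, $\calh = \bbr^2$, $\ell x = (x,0)^\top$, $U_t = e^{tB}$ with $B = \left(\begin{smallmatrix} a & b \\ 0 & d \end{smallmatrix}\right)$, $b \neq 0$, $a \neq d$; then $\pi U_t \ell = S_t$ with $S_t = e^{at}$, but $w(t) := (0,t)^\top$ lies in $\bbw(\calh_2)$ while $\Gamma(w)(t) = t\, b\, (e^{at} - e^{dt})/(a-d) \not\equiv 0$. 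So the half you present as ``easy'' is not secured by your argument either.

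For comparison with the paper: after reducing $\Gamma(w) = 0$ to $w(0) \in \calh_2$ together with $w - w(0) \in U^{-1} \bbw_0(\calh_2)$ --- the same reduction you perform --- the paper's proof concludes with the single clause ``Noting that $U^{-1} \bbw_0(\calh_2) = \bbw_0(\calh_2)$'', i.e.\ it asserts exactly the two-sided invariance $U_t \calh_2 = \calh_2$ for all $t \in \bbr$ that you single out as the delicate point, without deriving it from Assumption \ref{ass-2}. Your diagnosis of where the difficulty sits is therefore accurate; note also that the identity is trivially true in the setting of Remark \ref{rem-v2}, where $\calh = H$, $\ell = \pi = \Id_H$ and $\calh_2 = \{0\}$, which is the situation in which the lemma is ultimately applied in the proof of Theorem \ref{thm-2}. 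But as a proof your proposal is incomplete: one inclusion rests on the invalid adjoint step above, and the other you explicitly leave open.
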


\begin{proof}
Let $v \in \bbw(H)$ be arbitrary, and set $w := U^{-1} \ell v \in \bbw(\calh)$. Then we have
\begin{align*}
\Gamma w = \pi U (w - \Pi_{\calh_2} w(0)) = \pi U (U^{-1} \ell v - \Pi_{\calh_2} \ell v(0) ) = v,
\end{align*}
showing that $\ran(\Gamma) = \bbw(H)$. Furthermore, for every $w \in \bbw(\calh)$ we have $\Gamma(w) = 0$ if and only if
\begin{align*}
U(w - \Pi_{\calh_2}w(0)) \in \bbw(\calh_2),
\end{align*}
which is equivalent to
\begin{align*}
w - \Pi_{\calh_2}w(0) \in U^{-1} \bbw(\calh_2).
\end{align*}
This condition implies that $w(0) - \Pi_{\calh_2} w(0) \in \calh_2$, which is the case if and only if $w(0) \in \calh_2$. Noting that $U^{-1} \bbw_0(\calh_2) ) = \bbw_0(\calh_2)$, we obtain
\begin{align*}
\ker(\Gamma) = \calh_2 \oplus U^{-1} \bbw_0(\calh_2) = \bbw(\calh_2),
\end{align*}
completing the proof.
\end{proof}

\begin{lemma}\label{lemma-direct-sum}
We have the direct sum decomposition $\bbw(\calh) = \bbv_1 \oplus \bbv_2$, where the subspaces $\bbv_1$ and $\bbv_2$ are given by
\begin{align*}
\bbv_1 = \bbw(\calh_1) \quad \text{and} \quad \bbv_2 = \ker(\Gamma).
\end{align*}
\end{lemma}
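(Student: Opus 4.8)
The plan is to deduce the statement from the pointwise orthogonal splitting $\calh = \calh_1 \oplus \calh_2$ recorded just before the lemma, with Lemma~\ref{lemma-ran-ker-Gamma} doing the work needed to identify $\bbv_2$. The first step I would take is to invoke Lemma~\ref{lemma-ran-ker-Gamma}, which already establishes $\bbv_2 = \ker(\Gamma) = \bbw(\calh_2)$. This reduces the entire claim to proving the concrete decomposition $\bbw(\calh) = \bbw(\calh_1) \oplus \bbw(\calh_2)$, i.e.\ that every continuous $\calh$-valued path splits uniquely into a continuous $\calh_1$-valued path and a continuous $\calh_2$-valued path.

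For the sum $\bbw(\calh) = \bbw(\calh_1) + \bbw(\calh_2)$, I would use that $\ell\pi$ is the orthogonal projection onto $\calh_1$ and $\Pi_{\calh_2} = \Id_{\calh} - \ell\pi$ the orthogonal projection onto $\calh_2$, both bounded linear operators on $\calh$. Given $w \in \bbw(\calh)$, I would set $w_1(t) := \ell\pi\, w(t)$ and $w_2(t) := \Pi_{\calh_2} w(t)$. Since composing a continuous path with a bounded linear operator preserves continuity, one has $w_1 \in \bbw(\calh_1)$ and $w_2 \in \bbw(\calh_2)$, and clearly $w = w_1 + w_2$. This yields the spanning part of the direct sum.

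For directness I would argue that $\bbv_1 \cap \bbv_2 = \bbw(\calh_1) \cap \bbw(\calh_2) = \{0\}$: any $w$ lying in both spaces satisfies $w(t) \in \calh_1 \cap \calh_2$ for every $t \in \bbr_+$, and since $\calh_1$ and $\calh_2$ are orthogonal complements we have $\calh_1 \cap \calh_2 = \{0\}$, whence $w \equiv 0$. Together with the previous step this gives $\bbw(\calh) = \bbv_1 \oplus \bbv_2$.

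I do not expect a genuine obstacle here; the lemma is essentially a bookkeeping consequence of Lemma~\ref{lemma-ran-ker-Gamma} together with the orthogonal decomposition of $\calh$. The only point deserving explicit mention is the preservation of continuity under the (bounded) orthogonal projections, which is what allows the pointwise splitting $\calh = \calh_1 \oplus \calh_2$ to be lifted to the path-space level $\bbw(\calh) = \bbw(\calh_1) \oplus \bbw(\calh_2)$.
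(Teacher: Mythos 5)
Your proposal is correct and follows the same route as the paper: the paper's proof is the one-liner ``This follows from Lemma~\ref{lemma-ran-ker-Gamma}, because $\bbw(\calh) = \bbw(\calh_1) \oplus \bbw(\calh_2)$'', and you simply spell out the pointwise-projection argument behind that last decomposition, which the paper takes for granted.
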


\begin{proof}
This follows from Lemma \ref{lemma-ran-ker-Gamma}, because $\bbw(\calh) = \bbw(\calh_1) \oplus \bbw(\calh_2)$.
\end{proof}

\begin{remark}\label{rem-Pi-1}
Consequently, the restriction $\Gamma|_{\bbv_1}$ is one-to-one with $\ran(\Gamma|_{\bbv_1}) = \bbw(H)$. Denoting by $\Delta : \bbw(H) \to \bbv_1$ its inverse, for all $w \in \bbw(\calh)$ and $v \in \bbw(H)$ we have $\Gamma(w) = v$ if and only if $\Pi_1 w = \Delta(v)$, where $\Pi_1 : \bbw(\calh) \to \bbv_1$ denotes the corresponding projection on the first coordinate. 
\end{remark}

Now, we introduce several mappings, namely
\begin{align*}
&a : \mathbb{R}_+ \times \mathbb{W}(H) \rightarrow \calh, \quad a(t,w) := U_{-t} \ell \alpha(t,w),
\\ &b : \mathbb{R}_+ \times \mathbb{W}(H) \rightarrow L_2(U,\calh), \quad b(t,w) := U_{-t} \ell \sigma(t,w),
\\ &\bar{\alpha} : \mathbb{R}_+ \times \mathbb{W}(\calh) \rightarrow \calh, \quad \bar{\alpha}(t,w) := a(t,\Gamma(w)),
\\ &\bar{\sigma} : \mathbb{R}_+ \times \mathbb{W}(\calh) \rightarrow L_2(U,\calh), \quad \bar{\sigma}(t,w) := b(t,\Gamma(w)).
\end{align*}

According to \cite[Lemmas 3.5 and 3.6]{Tappe-YW} these mappings satisfy the corresponding conditions from Assumption \ref{ass-1}. Hence, we may apply the results from Section \ref{sec-SDEs} to the SDE (\ref{SDE}). We proceed with some auxiliary results about the connections between solutions to the SPDE (\ref{SPDE}) and the SDE (\ref{SDE}).

\begin{lemma}\label{lemma-XY}
Let $\mu$ be a probability measure on $(H,\calb(H))$, let $(X,W)$ be a martingale solution to the SPDE (\ref{SPDE}) such that $\mu$ is the distribution of $X(0)$, and set 
\begin{align}\label{Y-given-by-X}
Y := \ell X(0) + \int_0^{\bullet} a(s,X) ds + \int_0^{\bullet} b(s,X) dW(s).
\end{align}
Then the following statements are true:
\begin{enumerate}
\item[(1)] The pair $(Y,W)$ is a martingale solution to the SDE (\ref{SDE}) such that $\mu \circ \ell^{-1}$ is the distribution of $Y(0)$.

\item[(2)] We have $\bbp$-almost surely $X = \Gamma(Y)$.
\end{enumerate}
\end{lemma}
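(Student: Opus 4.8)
The plan is to establish statement (2) first, since statement (1) will then follow almost immediately. The crucial observation is that $Y(0) = \ell X(0) \in \ran(\ell) = \calh_1$, so that $\Pi_{\calh_2} Y(0) = 0$; by Remark \ref{rem-start-in-range} this gives $\Gamma(Y) = \pi U Y$, i.e. $\Gamma(Y)(t) = \pi U_t Y(t)$ for every $t \in \bbr_+$. Hence proving that $X = \Gamma(Y)$ reduces to showing that $X(t) = \pi U_t Y(t)$ holds $\bbp$-almost surely for all $t \in \bbr_+$.

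For the computation I would fix $t$, apply the (bounded) operator $U_t$ to
\[ Y(t) = \ell X(0) + \int_0^t a(s,X)\,ds + \int_0^t b(s,X)\,dW(s), \]
and move $U_t$ inside both integrals. Since $a(s,X) = U_{-s}\ell\alpha(s,X)$ and $b(s,X) = U_{-s}\ell\sigma(s,X)$, the group law $U_t U_{-s} = U_{t-s}$ then yields $U_t Y(t) = U_t\ell X(0) + \int_0^t U_{t-s}\ell\alpha(s,X)\,ds + \int_0^t U_{t-s}\ell\sigma(s,X)\,dW(s)$. Applying $\pi$ and invoking the commuting relation $\pi U_{t-s}\ell = S_{t-s}$ from Assumption \ref{ass-2} transforms the right-hand side into $S_t X(0) + \int_0^t S_{t-s}\alpha(s,X)\,ds + \int_0^t S_{t-s}\sigma(s,X)\,dW(s)$, which is exactly the mild-solution representation of $X(t)$ from Definition \ref{def-martingal-sol}. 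Thus $\pi U_t Y(t) = X(t)$, which proves $X = \Gamma(Y)$.

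Granting $X = \Gamma(Y)$, statement (1) follows quickly: by the definitions of $\bar{\alpha}$ and $\bar{\sigma}$ one has $\bar{\alpha}(s,Y) = a(s,\Gamma(Y)) = a(s,X)$ and $\bar{\sigma}(s,Y) = b(s,X)$, so the defining identity for $Y$ becomes precisely the integral equation $Y(t) = Y(0) + \int_0^t \bar{\alpha}(s,Y)\,ds + \int_0^t \bar{\sigma}(s,Y)\,dW(s)$ required of a martingale solution to the SDE. Adaptedness and path-continuity of $Y$ are standard, and since $Y(0) = \ell X(0)$ its distribution is the pushforward $\mu\circ\ell^{-1}$. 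The integrability conditions transfer from $X$ to $Y$ because, on each compact interval $[0,t]$, the group $(U_r)$ is uniformly bounded, so that $\| a(s,X) \| \le C_t \| \alpha(s,X) \|$ and, using $\| BT \|_{L_2} \le \| B \|\,\| T \|_{L_2}$ for the Hilbert--Schmidt norm, $\| b(s,X) \|_{L_2(U,\calh)} \le C_t \| \sigma(s,X) \|_{L_2(U,H)}$; the finiteness of the corresponding integrals for $X$ then gives finiteness for $Y$.

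The only genuinely delicate point is the interchange of the fixed bounded operator $U_t$ with the Bochner integral and, in particular, with the It\^{o} integral. This rests on the standard fact that a bounded operator commutes with the stochastic integral, i.e. $B\int_0^t \Phi\,dW = \int_0^t B\Phi\,dW$, together with the verification that $s \mapsto U_{t-s}\ell\sigma(s,X)$ is integrable in the appropriate $L_2(Q^{1/2}(\bar{U}),\calh)$ sense, which is ensured by the same Hilbert--Schmidt estimate and the local boundedness of $(U_r)$. Everything else is routine bookkeeping with the group law and the commuting diagram of Assumption \ref{ass-2}.
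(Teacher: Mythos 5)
Your proof is correct and is, in substance, the same argument the paper relies on: the paper's proof of Lemma \ref{lemma-XY} is a one-line citation of \cite[Cor. 3.9]{Tappe-YW}, and the computation you carry out --- apply $U_t$ to the defining formula for $Y$, use the group law $U_t U_{-s} = U_{t-s}$ together with $\pi U_{t-s} \ell = S_{t-s}$ from Assumption \ref{ass-2}, and note that $Y(0) = \ell X(0) \in \calh_1$ so that $\Gamma(Y) = \pi U Y$ by Remark \ref{rem-start-in-range} --- is precisely the moving-frame argument behind that citation, including the correct handling of the bounded-operator interchange with the Bochner and It\^{o} integrals and the transfer of the integrability conditions via local boundedness of the group. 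The only detail worth making explicit is the passage from the identity $X(t) = \pi U_t Y(t)$ holding $\bbp$-almost surely for each fixed $t$ to a single null set working for all $t$ simultaneously, which follows from path continuity of both sides and is routine.
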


\begin{proof}
This is a consequence of \cite[Cor. 3.9]{Tappe-YW}.
\end{proof}

\begin{lemma}\label{lemma-YX}
Let $\mu$ be a probability measure on $(H,\calb(H))$, let $(Y,W)$ be a martingale solution to the SDE (\ref{SDE}) such that $\mu \circ \ell^{-1}$ is the distribution of $Y(0)$, and set $X := \Gamma(Y)$. Then the following statements are true:
\begin{enumerate}
\item[(1)] The pair $(X,W)$ is a martingale solution to the SPDE (\ref{SPDE}) such that $\mu$ is the distribution of $X(0)$.

\item[(2)] We have $\bbp$-almost surely $\Pi_1 Y = \Delta(X)$ and
\begin{align*}
Y = \ell X(0) + \int_0^{\bullet} a(s,X) ds + \int_0^{\bullet} b(s,X) dW(s).
\end{align*}
\end{enumerate}
\end{lemma}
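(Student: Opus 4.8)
The plan is to run the construction of Lemma \ref{lemma-XY} in reverse, exploiting that the hypotheses force the driving process $Y$ to start inside $\calh_1 = \ran(\ell)$. Indeed, since the distribution $\mu \circ \ell^{-1}$ of $Y(0)$ satisfies $(\mu \circ \ell^{-1})(\calh_1) = \mu(\ell^{-1}(\calh_1)) = \mu(H) = 1$, we have $Y(0) \in \calh_1$ $\bbp$-almost surely. Hence we are in the situation of Remark \ref{rem-start-in-range}, so that $X = \Gamma(Y) = \pi U Y$ $\bbp$-almost surely, i.e. $X(t) = \pi U_t Y(t)$ for all $t \in \bbr_+$. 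Evaluating at $t = 0$ gives $X(0) = \pi Y(0)$, and writing $Y(0) = \ell \xi$ with $\xi \in H$ we obtain $X(0) = \pi \ell \xi = \xi$ and $Y(0) = \ell X(0)$; in particular the distribution of $X(0)$ is $(\mu \circ \ell^{-1}) \circ \pi^{-1} = \mu \circ (\pi \ell)^{-1} = \mu$. Since $\Gamma$ acts pathwise through $(\Gamma w)(t) = \pi U_t(w(t) - \Pi_{\calh_2} w(0))$, the process $X$ is adapted with continuous paths.

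Next I would rewrite the integral equation for $Y$ in terms of $X$. As $(Y,W)$ solves the SDE (\ref{SDE}) and $X = \Gamma(Y)$, the definitions of $\bar\alpha$ and $\bar\sigma$ give $\bar\alpha(s,Y) = a(s,\Gamma(Y)) = a(s,X)$ and $\bar\sigma(s,Y) = b(s,\Gamma(Y)) = b(s,X)$, whence, using $Y(0) = \ell X(0)$,
\begin{align*}
Y = \ell X(0) + \int_0^{\bullet} a(s,X)\, ds + \int_0^{\bullet} b(s,X)\, dW(s),
\end{align*}
which is the second assertion of statement (2). Applying the fixed bounded operator $\pi U_t$ to $Y(t)$ and commuting it past the Bochner and stochastic integrals, I then use the group law $U_t U_{-s} = U_{t-s}$ together with the intertwining relation $\pi U_r \ell = S_r$ from (\ref{diagram-commutes}), valid for $r = t - s \geq 0$ on the integration range $s \in [0,t]$. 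Since $a(s,X) = U_{-s}\ell\alpha(s,X)$ and $b(s,X) = U_{-s}\ell\sigma(s,X)$, this yields $\pi U_t \ell X(0) = S_t X(0)$, $\pi U_t \int_0^t a(s,X)\,ds = \int_0^t S_{t-s}\alpha(s,X)\,ds$ and $\pi U_t \int_0^t b(s,X)\,dW(s) = \int_0^t S_{t-s}\sigma(s,X)\,dW(s)$, so that $X(t) = \pi U_t Y(t)$ becomes the mild equation of Definition \ref{def-martingal-sol}.

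Finally, the integrability conditions transfer from the SDE to the SPDE. Since $\ell$ is an isometry (also for the Hilbert-Schmidt norm) and $\| U_s \|$ is bounded by some constant $C_t$ on $[0,t]$ as $(U_r)_{r \in \bbr}$ is a $C_0$-group, the estimates $\| \alpha(s,X) \| = \| U_s a(s,X) \| \leq C_t \| a(s,X) \|$ and $\| \sigma(s,X) \|_{L_2(U,H)} = \| U_s b(s,X) \|_{L_2(U,\calh)} \leq C_t \| b(s,X) \|_{L_2(U,\calh)}$ show that finiteness of $\int_0^t \| \bar\alpha(s,Y) \|\,ds + \int_0^t \| \bar\sigma(s,Y) \|_{L_2(U,\calh)}^2\,ds$ forces the corresponding finiteness for $X$. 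This proves statement (1). The remaining identity $\Pi_1 Y = \Delta(X)$ in statement (2) is immediate from Remark \ref{rem-Pi-1}, applied with $w = Y$ and $v = X = \Gamma(Y)$.

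The step I expect to require the most care is the reduction $Y(0) \in \calh_1$ $\bbp$-a.s., which places us in the regime of Remark \ref{rem-start-in-range}, together with the commutation of the upper-limit-dependent operator $\pi U_t$ with the stochastic integral; the latter is standard for a fixed bounded operator but must be invoked so that $U_t U_{-s}$ collapses to $U_{t-s}$ under the integral sign before $\pi U_{t-s}\ell$ is identified with $S_{t-s}$. Alternatively, as with Lemma \ref{lemma-XY}, the entire statement can be deduced from \cite[Cor. 3.9]{Tappe-YW}.
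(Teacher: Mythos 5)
Your proof is correct and takes essentially the same route as the paper: the paper's proof simply cites \cite[Cor.~3.11]{Tappe-YW} together with Remark~\ref{rem-Pi-1}, and your argument is precisely the moving-frame computation underlying that citation (reducing to $Y(0) \in \calh_1$ via Remark~\ref{rem-start-in-range}, applying $\pi U_t$ with the intertwining relation (\ref{diagram-commutes}), and transferring the integrability conditions via the isometry of $\ell$ and local boundedness of the group). You even note the citation as an alternative at the end, so there is nothing to correct.
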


\begin{proof}
This is a consequence of \cite[Cor. 3.11]{Tappe-YW} and Remark \ref{rem-Pi-1}.
\end{proof}

Now, we prepare the required results for the proof of Theorem \ref{thm-1}.

\begin{proposition}\label{prop-1-a}
Let $(X,W)$ be a mild solution to the SPDE (\ref{SPDE}). Then the pair $(Y,W)$ with the process $Y$ given by (\ref{Y-given-by-X}) is a strong solution to the SDE (\ref{SDE}).
\end{proposition}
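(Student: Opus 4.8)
The plan is to build on Lemma~\ref{lemma-XY}, which already guarantees that $(Y,W)$ is a martingale solution to the SDE~(\ref{SDE}) with $Y(0) = \ell X(0)$. Thus only the \emph{strong} character of the solution remains to be established; by Remark~\ref{rem-sol-SDEs} this means producing a map $\bar F \in \hat{\cale}(\calh)$ satisfying condition~(1) of Definition~\ref{def-mild-sol} such that $Y = \bar F_{\nu}(Y(0),\bar W)$ $\bbp$-almost surely, where $\nu := \bbp \circ Y(0)^{-1}$. Since $(X,W)$ is a mild solution, we may fix $F \in \hat{\cale}(H)$ with the adaptedness property from Definition~\ref{def-mild-sol}(1) and with $X = F_{\mu}(X(0),\bar W)$ $\bbp$-a.s., where $\mu := \bbp \circ X(0)^{-1}$. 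Because $\pi \ell = \Id$, we have $X(0) = \pi Y(0)$, so each summand of~(\ref{Y-given-by-X}) becomes a functional of $(Y(0),\bar W)$ as soon as $X$ is represented through $F(\pi \bdot,\bdot)$.

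The initial term $\ell X(0) = Y(0)$ and the drift are harmless analytically. Indeed, since $a(s,\bdot)$ is $\calb_s(\bbw(H))$-measurable, the map $x \mapsto \int_0^{\bullet} a(s,x)\,ds$ (set to $0$ on the exceptional set where the integral diverges) is a Borel-measurable and adapted functional $\bbw(H) \to \bbw(\calh)$, and composing it with the measurable map $(\eta,w) \mapsto F_{\mu}(\pi\eta,w)$ poses no problem. The genuine work concerns the stochastic integral, which must be turned into a truly pathwise functional of $(X,\bar W)$. For this I would apply Theorem~\ref{thm-integral}, which furnishes pathwise maps $I^n(x,w)$, given by~(\ref{Jn-def}), with $I^n(X,\bar W) \overset{\text{a.s.}}{\to} I = \int_0^{\bullet} b(s,X)\,dW(s)$ in $\bbw(\calh)$. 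As $\bbw(\calh)$ is Polish, the set of $(x,w)$ along which $(I^n(x,w))_{n}$ converges is Borel, and setting $\Phi(x,w) := \lim_n I^n(x,w)$ there and $\Phi(x,w) := 0$ otherwise defines a Borel functional with $\Phi(X,\bar W) = I$ $\bbp$-a.s.; moreover $\Phi$ is adapted, since by~(\ref{Jn-def}) the restriction of each $I^n$ to $[0,T]$ only uses the values of $x$ and the increments of $w$ on $[0,T]$.

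Assembling these pieces, for an arbitrary probability measure $\nu$ on $(\calh,\calb(\calh))$ I would set
\[
\bar F_{\nu}(\eta,w) := \eta + \int_0^{\bullet} a\big(s,F_{\nu\circ\pi^{-1}}(\pi\eta,w)\big)\,ds + \Phi\big(F_{\nu\circ\pi^{-1}}(\pi\eta,w),w\big),
\]
and take $\bar F$ to be the analogous map built from $F$ itself before any modification is chosen. Evaluating at $(\eta,w) = (Y(0),\bar W)$ and using $\pi Y(0) = X(0)$, $F_{\mu}(X(0),\bar W) = X$ and $\Phi(X,\bar W) = I$ reproduces~(\ref{Y-given-by-X}) exactly, whence $Y = \bar F_{\nu}(Y(0),\bar W)$ $\bbp$-a.s. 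It then remains to verify that $\bar F \in \hat{\cale}(\calh)$ and that condition~(1) of Definition~\ref{def-mild-sol} holds; the latter follows by combining the adaptedness of $F$, of the drift functional, and of $\Phi$.

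I expect the decisive difficulty to be the measurability bookkeeping entering $\bar F \in \hat{\cale}(\calh)$ rather than any analytic estimate. The subtle point is that $F_{\nu\circ\pi^{-1}}$ is only measurable for the $(\nu\circ\pi^{-1}) \otimes \mathbb{P}^Q$-completion of $\calb(H) \otimes \calb(\bbw_0(\bar U))$, so one must check that precomposition with $(\eta,w) \mapsto (\pi\eta,w)$ carries $\nu \otimes \mathbb{P}^Q$-null sets into $(\nu\circ\pi^{-1})\otimes\mathbb{P}^Q$-null sets, so that the composition is measurable for the $\nu\otimes\mathbb{P}^Q$-completion of $\calb(\calh)\otimes\calb(\bbw_0(\bar U))$; this holds because $(\pi \times \Id)$ pushes $\nu\otimes\mathbb{P}^Q$ forward to $(\nu\circ\pi^{-1})\otimes\mathbb{P}^Q$, which also yields that $\bar F_{\nu}$ is a genuine $\nu\otimes\mathbb{P}^Q$-modification of $\bar F$. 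Once this is settled, the adaptedness of the drift functional and of $\Phi$ complete the verification.
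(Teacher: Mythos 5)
Your proposal follows the paper's own proof essentially step for step: Lemma~\ref{lemma-XY} for the martingale-solution part, Theorem~\ref{thm-integral} to turn the stochastic integral into a Borel, adapted, pathwise functional of $(x,w)$, composition with $F_{\nu\circ\pi^{-1}}(\pi\eta,w)$ to build the candidate map in $\hat{\cale}(\calh)$, and the pushforward of $\nu\otimes\mathbb{P}^Q$ under $(\eta,w)\mapsto(\pi\eta,w)$ to handle both the completion-measurability and the modification property --- the paper phrases this last point via the $\nu$-nullset $\{\pi\in N\}$, which is the same argument. Your only deviation, taking $\eta$ rather than $\ell\pi\eta$ as the initial term, is immaterial, since the two coincide almost surely at $\eta=Y(0)$.
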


\begin{proof}
By Lemma \ref{lemma-XY} the pair $(Y,W)$ is a martingale solution to (\ref{SDE}). Since $(X,W)$ is a mild solution to the SPDE (\ref{SPDE}), there exists a mapping $F \in \hat{\cale}(H)$ such that the two conditions from Definition \ref{def-mild-sol} are fulfilled. Furthermore, by Theorem \ref{thm-integral} for each $T \in \bbn$ there is an array of subsequences $\{ j_{T,n},k_{T,n},\ell_{T,n},m_{T,n} \}_{n \in \bbn}$ such that $I^n(X,\bar{W}) \overset{\text{a.s.}}{\to} I$ in $\bbw(\calh)$, where $I$ is given by (\ref{J-def}), and where for all $(x,w) \in \bbw(H) \times \bbw_0(\bar{U})$ the sequence $(I^n(x,w))_{n \in \bbn}$ is given by (\ref{Jn-def}). We define a mapping 
\begin{align*}
\Phi : H \times \bbw(H) \times \bbw_0(\bar{U}) \to \bbw(\calh) 
\end{align*}
as follows. Let $\xi \in H$ and $x \in \bbw(H)$ be arbitrary. For all $w \in \bbw_0(\bar{U})$ such that $I^n(x,w)$ converges in $\bbw(\calh)$, we set
\begin{align*}
\Phi(\xi,x,w) := \ell \xi + \int_0^{\bullet} a(s,x)ds + \lim_{n \to \infty} I^n(x,w),
\end{align*}
and otherwise we set $\Phi(\xi,x,w) := 0$. Then the mapping
\begin{align}\label{Phi-meas}
\text{$\Phi$ is $\calb(H) \otimes \calb(\bbw(H)) \otimes \calb(\mathbb{W}_0(\bar{U})) / \calb(\mathbb{W}(\calh))$-measurable,}
\end{align}
and for all $\xi \in H$ and $t \in \bbr_+$ the mapping
\begin{align}\label{Phi-meas-t}
\text{$(x,w) \mapsto \Phi(\xi,x,w)$ is $\calb_t(\bbw(H)) \otimes \calb_t(\mathbb{W}_0(\bar{U})) / \calb_t(\mathbb{W}(\calh))$-measurable.}
\end{align}
Furthermore, we define the mapping
\begin{align*}
G : \calh \times \bbw_0(\bar{U}) \to \bbw(\calh), \quad G(\eta,w) := \Phi ( \pi \eta, F(\pi \eta,w), w).
\end{align*}
We claim that $G \in \hat{\cale}(\calh)$. For this purpose, for each probability measure $\nu$ on $(\calh,\calb(\calh))$ we define the mapping
\begin{align*}
G_{\nu} : \calh \times \bbw_0(\bar{U}) \to \bbw(\calh), \quad G_{\nu}(\eta,w) := \Phi ( \pi \eta, F_{\nu \circ \pi^{-1}}(\pi \eta,w), w),
\end{align*}
where the mapping $F_{\nu \circ \pi^{-1}}$ stems from Definition \ref{def-cale}. By (\ref{Phi-meas}) the mapping
\begin{align*}
\text{$G_{\nu}$ is $\overline{\calb(\calh) \otimes \calb(\mathbb{W}_0(\bar{U}))}^{\mu \otimes \mathbb{P}^Q} / \calb(\mathbb{W}(\calh))$-measurable.}
\end{align*}
Let $\nu$ be an arbitrary probability measure on $(\calh,\calb(\calh))$, and define $\mu := \nu \circ \pi^{-1}$. Since $F \in \hat{\cale}(H)$, there is a $\mu$-nullset $N \subset H$ such that for all $\xi \in N^c$ we have
\begin{align*}
F(\xi,w) = F_{\mu}(\xi,w) \quad \text{for $\mathbb{P}^Q$-almost all $w \in \mathbb{W}_0(\bar{U})$.}
\end{align*}
The set $\{ \pi \in N \} \subset \calh$ is a $\nu$-nullset, and for all $\eta \in \{ \pi \in N \}^c = \{ \pi \in N^c \}$ we have
\begin{align*}
F(\pi \eta,w) = F_{\nu \circ \pi^{-1}}(\pi \eta,w) \quad \text{for $\mathbb{P}^Q$-almost all $w \in \mathbb{W}_0(\bar{U})$.}
\end{align*}
Therefore, for $\nu$-almost all $\eta \in \calh$ we have
\begin{align*}
G(\eta,w) = G_{\nu}(\eta,w) \quad \text{for $\mathbb{P}^Q$-almost all $w \in \mathbb{W}_0(\bar{U})$,}
\end{align*}
showing that $G \in \hat{\cale}(\calh)$. Next, we show that $(Y,W)$ is a strong solution to the SDE (\ref{SDE}). By (\ref{Phi-meas-t}) for all $\eta \in \calh$ and $t \in \bbr_+$ the mapping
\begin{align*}
\text{$w \mapsto G(\eta,w)$ is $\overline{\calb_t(\mathbb{W}_0(\bar{U}))}^{\mathbb{P}^Q} / \calb_t(\mathbb{W}(\calh))$-measurable.} 
\end{align*}
Since $I^n(X,\bar{W}) \overset{\text{a.s.}}{\to} I$ in $\bbw(\calh)$, by (\ref{Y-given-by-X}) we obtain $\bbp$-almost surely
\begin{align*}
Y &= \ell X(0) + \int_0^{\bullet} a(s,X) ds + \int_0^{\bullet} b(s,X) dW(s)
\\ &= \Phi(X(0),X,\bar{W}) = \Phi \big( X(0),F_{\bbp \circ X(0)^{-1}}(X(0),\bar{W}),\bar{W} \big)
\\ &= \Phi \big( \pi Y(0), F_{\bbp \circ (\pi Y(0))^{-1}}(\pi Y(0),\bar{W}),\bar{W} \big)
\\ &= \Phi \big( \pi Y(0), F_{(\bbp \circ Y(0)^{-1}) \circ \pi^{-1}}(\pi Y(0),\bar{W}),\bar{W} \big)
\\ &= G_{\bbp \circ Y(0)^{-1}}(Y(0),\bar{W}),
\end{align*}
completing the proof.
\end{proof}

\begin{proposition}\label{prop-1-b}
Let $\mu$ be a probability measure on $(H,\calb(H))$. If joint uniqueness in law given $\mu$ holds for the SPDE (\ref{SPDE}), then joint uniqueness in law given $\mu \circ \ell^{-1}$ holds for the SDE (\ref{SDE}).
\end{proposition}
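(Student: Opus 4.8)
The plan is to reduce the statement to joint uniqueness in law given $\mu$ for the SPDE (\ref{SPDE}), which is available by hypothesis, and to transport it through the correspondence between solutions of (\ref{SPDE}) and (\ref{SDE}) furnished by Lemma \ref{lemma-YX}, using Theorem \ref{thm-integral} to deal with the stochastic integral. Concretely, I would start from two martingale solutions $(\bbb,Y,W)$ and $(\bbb',Y',W')$ to the SDE (\ref{SDE}) with $\bbp \circ Y(0)^{-1} = \bbp' \circ Y'(0)^{-1} = \mu \circ \ell^{-1}$, and set $X := \Gamma(Y)$ and $X' := \Gamma(Y')$. By Lemma \ref{lemma-YX} the pairs $(X,W)$ and $(X',W')$ are martingale solutions to the SPDE (\ref{SPDE}) with $\bbp \circ X(0)^{-1} = \bbp' \circ X'(0)^{-1} = \mu$, and moreover $\bbp$-almost surely
\begin{align*}
Y = \ell X(0) + \int_0^{\bullet} a(s,X)ds + \int_0^{\bullet} b(s,X) dW(s),
\end{align*}
with the analogous identity for $Y'$ on $\bbb'$. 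Applying joint uniqueness in law given $\mu$ for (\ref{SPDE}) to $(X,W)$ and $(X',W')$ then yields that $(X,\bar{W})$ and $(X',\bar{W'})$ have the same law on $\bbw(H) \times \bbw_0(\bar{U})$.

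The key step is to upgrade this equality in law so that it also records the stochastic integral. Writing $I := \int_0^{\bullet} b(s,X)dW(s)$ and $I' := \int_0^{\bullet} b(s,X')dW'(s)$, I would apply Theorem \ref{thm-integral} to the first solution to obtain an array of subsequences such that $I^n(X,\bar{W}) \overset{\text{a.s.}}{\to} I$ in $\bbw(\calh)$, where each $I^n$ is the fixed Borel map $\bbw(H) \times \bbw_0(\bar{U}) \to \bbw(\calh)$ given by (\ref{Jn-def}). Since the maps $I^n$ are deterministic and $(X,\bar{W})$ and $(X',\bar{W'})$ have the same law, the $\bbw(\calh)^{\bbn}$-valued sequences $(I^n(X,\bar{W}))_{n \in \bbn}$ and $(I^n(X',\bar{W'}))_{n \in \bbn}$ have the same law jointly with their arguments. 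Because almost sure convergence of a sequence is an event measurable with respect to the law of that sequence, the sequence $(I^n(X',\bar{W'}))_{n \in \bbn}$ then converges almost surely as well, and passing to the limit in the joint law I obtain that $(X,I,\bar{W})$ and $(X',I',\bar{W'})$ have the same law on $\bbw(H) \times \bbw(\calh) \times \bbw_0(\bar{U})$, provided the almost sure limit of $(I^n(X',\bar{W'}))_{n \in \bbn}$ is identified with $I'$.

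Finally, since $X(0)$ and $\int_0^{\bullet} a(s,X)ds$ are fixed measurable functionals of $X$, the displayed identity exhibits $(Y,\bar{W})$ as one and the same measurable function of $(X,I,\bar{W})$ for both solutions; hence the equality of laws of $(X,I,\bar{W})$ and $(X',I',\bar{W'})$ transfers to $(Y,\bar{W})$ and $(Y',\bar{W'})$, which is precisely joint uniqueness in law given $\mu \circ \ell^{-1}$ for the SDE (\ref{SDE}). I expect the main obstacle to be the identification highlighted above: the array in Theorem \ref{thm-integral} is constructed for the first solution, so to conclude that the almost sure limit of $I^n(X',\bar{W'})$ equals the It\^{o} integral $I'$ one must argue that the same deterministic approximation converges to the stochastic integral for the second solution as well (e.g. in probability, whence the almost sure and in-probability limits coincide). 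This transfer of the integral across the two stochastic bases is exactly the feature that the explicit, pathwise approximation of Theorem \ref{thm-integral} is designed to supply.
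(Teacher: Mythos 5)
Your proof is correct and follows essentially the same route as the paper: the paper disposes of Proposition \ref{prop-1-b} with the one-line remark that it is ``a consequence of Lemma \ref{lemma-YX}'', meaning exactly your reduction $X := \Gamma(Y)$, $X' := \Gamma(Y')$, application of joint uniqueness in law given $\mu$ for (\ref{SPDE}), and transfer back through the representation $Y = \ell X(0) + \int_0^{\bullet} a(s,X)\,ds + \int_0^{\bullet} b(s,X)\,dW(s)$. What you spell out explicitly --- using Theorem \ref{thm-integral} to realize the stochastic integral as an almost sure limit of fixed Borel maps $I^n$ of $(X,\bar{W})$, together with the identification of the limit for the second solution via in-probability convergence (whose rates depend only on the common law of $(X,\bar{W})$) --- is precisely the mechanism the paper leaves implicit here and deploys explicitly in the neighboring Proposition \ref{prop-1-a}, so you have correctly identified and resolved the one genuine subtlety.
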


\begin{proof}
This is a consequence of Lemma \ref{lemma-YX}.
\end{proof}

\begin{proposition}\label{prop-1-c}
Let $\mu$ be a probability measure on $(H,\calb(H))$. If pathwise uniqueness given $\mu \circ \ell^{-1}$ holds for the SDE (\ref{SDE}), then pathwise uniqueness given $\mu$ holds for the SPDE (\ref{SPDE}).
\end{proposition}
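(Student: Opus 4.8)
The plan is to lift the two competing solutions of the SPDE (\ref{SPDE}) to the SDE (\ref{SDE}) via the formula (\ref{Y-given-by-X}), invoke pathwise uniqueness at the level of the SDE, and then project back using the map $\Gamma$. This is exactly the direction in which Lemma \ref{lemma-XY} operates, so the proof should be a short assembly of that lemma together with the linearity of $\ell$.

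First I would take two martingale solutions $(X,W)$ and $(X',W)$ to the SPDE (\ref{SPDE}) on the same stochastic basis $\bbb$, sharing the same standard $\bbr^{\infty}$-Wiener process $W$, and satisfying $\bbp(X(0) = X'(0)) = 1$ together with $\bbp \circ X(0)^{-1} = \mu$. I then define the lifted process $Y$ by (\ref{Y-given-by-X}), namely
\[
Y := \ell X(0) + \int_0^{\bullet} a(s,X) ds + \int_0^{\bullet} b(s,X) dW(s),
\]
and define $Y'$ by the analogous expression with $X'$ in place of $X$. By Lemma \ref{lemma-XY}, each of the pairs $(Y,W)$ and $(Y',W)$ is a martingale solution to the SDE (\ref{SDE}) whose initial value is distributed according to $\mu \circ \ell^{-1}$, and moreover $X = \Gamma(Y)$ and $X' = \Gamma(Y')$ hold $\bbp$-almost surely.

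Next I would check that $(Y,W)$ and $(Y',W)$ satisfy the hypotheses of pathwise uniqueness given $\mu \circ \ell^{-1}$ for the SDE (\ref{SDE}). They are built on the same stochastic basis $\bbb$ and driven by the same Wiener process $W$. Since $\ell$ is linear and $X(0) = X'(0)$ holds $\bbp$-almost surely, we have $Y(0) = \ell X(0) = \ell X'(0) = Y'(0)$ $\bbp$-almost surely, and by Lemma \ref{lemma-XY} the common distribution of $Y(0)$ is $\mu \circ \ell^{-1}$. Applying the assumed pathwise uniqueness given $\mu \circ \ell^{-1}$ for the SDE (\ref{SDE}) therefore yields $Y = Y'$ $\bbp$-almost surely.

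Finally, using once more the identities $X = \Gamma(Y)$ and $X' = \Gamma(Y')$ from Lemma \ref{lemma-XY}, I conclude that $X = \Gamma(Y) = \Gamma(Y') = X'$ $\bbp$-almost surely, which is precisely pathwise uniqueness given $\mu$ for the SPDE (\ref{SPDE}). I do not expect any genuine obstacle here; the only points that require a little care are the verifications that the lifted pair indeed forms an admissible pair for the SDE notion of pathwise uniqueness (same basis, same $W$, almost surely coinciding and correctly distributed initial values), and these follow immediately from Lemma \ref{lemma-XY} and the linearity of $\ell$.
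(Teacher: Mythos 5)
Your proposal is correct and is exactly the paper's argument: the paper's proof of Proposition \ref{prop-1-c} is the one-line statement ``This is a consequence of Lemma \ref{lemma-XY}'', and your write-up is precisely the intended expansion of that citation --- lift both solutions via (\ref{Y-given-by-X}), note $Y(0)=\ell X(0)=\ell X'(0)=Y'(0)$ almost surely with distribution $\mu\circ\ell^{-1}$, apply pathwise uniqueness for the SDE (\ref{SDE}), and recover $X=\Gamma(Y)=\Gamma(Y')=X'$ almost surely. No gaps; the verifications you flag (same basis, same $W$, coinciding and correctly distributed initial values) are indeed all that is needed.
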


\begin{proof}
This is a consequence of Lemma \ref{lemma-XY}.
\end{proof}

Now, the proof of Theorem \ref{thm-1} follows from combining Theorem \ref{thm-1-Y} and Propositions \ref{prop-1-a}--\ref{prop-1-c}. Next, we prepare the required results for the proof of Theorem \ref{thm-2}. For the following definition recall the direct sum decomposition $\bbw(\calh) = \bbv_1 \oplus \bbv_2$ from Lemma \ref{lemma-direct-sum}, and that $\Pi_1 : \bbw(\calh) \to \bbv_1$ denotes the projection on the first coordinate. 

\begin{definition}\label{def-uni-modulo}
Let $\nu$ be a probability measure on $(\calh,\calb(\calh))$. 
\begin{enumerate}
\item We say that \emph{uniqueness in law given $\nu$ modulo $\bbv_2$} holds for the SDE (\ref{SDE}) if for two weak solutions $(\bbb,Y,W)$ and $(\bbb',Y',W')$ such that
\begin{align}\label{u-in-law-mod}
\bbp \circ Y(0)^{-1} = \bbp' \circ Y'(0)^{-1} = \nu
\end{align}
as measures on $(\calh,\calb(\calh))$ we have
\begin{align*}
\bbp \circ (\Pi_1 Y)^{-1} = \bbp' \circ (\Pi_1 Y')^{-1}
\end{align*}
as measures on $(\bbv_1,\calb(\bbv_1))$.

\item We say that \emph{joint uniqueness in law given $\nu$ modulo $\bbv_2$} holds for the SDE (\ref{SDE}) if for two weak solutions $(\bbb,Y,W)$ and $(\bbb',Y',W')$ such that (\ref{u-in-law-mod}) as measures on $(\calh,\calb(\calh))$ we have
\begin{align*}
\bbp \circ (\Pi_1 Y,\bar{W})^{-1} = \bbp' \circ (\Pi_1 Y',\bar{W'})^{-1}
\end{align*}
as measures on $(\bbv_1 \times \mathbb{W}_0(\bar{U}),\calb(\bbv_1) \otimes \calb(\mathbb{W}_0(\bar{U})))$.
\end{enumerate}
\end{definition}

\begin{proposition}\label{prop-2-a}
Let $x \in H$ be arbitrary, and assume that uniqueness in law given $\delta_x$ holds for the SPDE (\ref{SPDE}). Then uniqueness in law given $\delta_{\ell(x)}$ modulo $\bbv_2$ holds for the SDE (\ref{SDE}).
\end{proposition}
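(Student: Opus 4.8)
The plan is to transfer the uniqueness-in-law statement from the SPDE~(\ref{SPDE}) to the SDE~(\ref{SDE}) by means of the pathwise correspondence $X = \Gamma(Y)$ recorded in Lemma~\ref{lemma-YX}, combined with the identity $\Pi_1 Y = \Delta(X)$ from its part~(2). First I would take two arbitrary weak solutions $(\bbb,Y,W)$ and $(\bbb',Y',W')$ to the SDE~(\ref{SDE}) satisfying $\bbp \circ Y(0)^{-1} = \bbp' \circ Y'(0)^{-1} = \delta_{\ell(x)}$. Recalling from Remark~\ref{rem-sol-SDEs} that for the SDE weak solutions and martingale solutions coincide, and noting that $\delta_{\ell(x)} = \delta_x \circ \ell^{-1}$, I may apply Lemma~\ref{lemma-YX} with $\mu = \delta_x$ to each solution, setting $X := \Gamma(Y)$ and $X' := \Gamma(Y')$. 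By part~(1) of that lemma, $(X,W)$ and $(X',W')$ are then martingale solutions to the SPDE~(\ref{SPDE}) whose initial distributions both equal $\delta_x$.

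Next I would invoke the hypothesis that uniqueness in law given $\delta_x$ holds for the SPDE, which yields $\bbp \circ X^{-1} = \bbp' \circ (X')^{-1}$ as measures on $(\bbw(H),\calb(\bbw(H)))$. To pass to the first component of the SDE solutions, I would use part~(2) of Lemma~\ref{lemma-YX}, which gives $\Pi_1 Y = \Delta(X)$ $\bbp$-almost surely and $\Pi_1 Y' = \Delta(X')$ $\bbp'$-almost surely, where $\Delta : \bbw(H) \to \bbv_1$ is the inverse of $\Gamma|_{\bbv_1}$ from Remark~\ref{rem-Pi-1}. Pushing the equality of the laws of $X$ and $X'$ forward through the measurable map $\Delta$ then gives
\[
\bbp \circ (\Pi_1 Y)^{-1} = (\bbp \circ X^{-1}) \circ \Delta^{-1} = (\bbp' \circ (X')^{-1}) \circ \Delta^{-1} = \bbp' \circ (\Pi_1 Y')^{-1},
\]
as measures on $(\bbv_1,\calb(\bbv_1))$, which is precisely uniqueness in law given $\delta_{\ell(x)}$ modulo $\bbv_2$ in the sense of Definition~\ref{def-uni-modulo}.

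The argument is essentially bookkeeping once Lemma~\ref{lemma-YX} is in place, so I do not anticipate a serious obstacle. The one point requiring care is the Borel measurability of $\Delta$, needed to justify the pushforward: I would obtain this by observing that $\Gamma|_{\bbv_1} : \bbv_1 \to \bbw(H)$ is a continuous linear bijection between Fréchet spaces (by Lemmas~\ref{lemma-ran-ker-Gamma} and~\ref{lemma-direct-sum}), so the open mapping theorem makes its inverse $\Delta$ continuous and hence Borel measurable. The only other subtlety is tracking that the almost-sure identities $\Pi_1 Y = \Delta(X)$ and $\Pi_1 Y' = \Delta(X')$ are exactly what upgrades the SPDE-level equality of distributions to the level of the first coordinate of the SDE solutions, so that no information about the $\bbv_2$-component of $Y$ is needed.
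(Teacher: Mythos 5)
Your proposal is correct and matches the paper's proof, which is exactly the one-line citation ``This is a consequence of Lemma~\ref{lemma-YX}'': you unpack that citation in the intended way, applying Lemma~\ref{lemma-YX} with $\mu = \delta_x$ (so that $\mu \circ \ell^{-1} = \delta_{\ell(x)}$), using the SPDE-level uniqueness in law for $X = \Gamma(Y)$, and pushing forward through $\Delta$ via the identity $\Pi_1 Y = \Delta(X)$ from part~(2). Your extra care about the Borel measurability of $\Delta$ (continuity via the open mapping theorem for the continuous linear bijection $\Gamma|_{\bbv_1} : \bbv_1 \to \bbw(H)$ between Fr\'{e}chet spaces) is a valid and welcome detail that the paper leaves implicit in Remark~\ref{rem-Pi-1}.
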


\begin{proof}
This is a consequence of Lemma \ref{lemma-YX}.
\end{proof}

\begin{proposition}\label{prop-2-c}
Let $x \in H$ be arbitrary, and assume that joint uniqueness in law given $\delta_{\ell(x)}$ modulo $\bbv_2$ holds for the SDE (\ref{SDE}). Then joint uniqueness in law given $\delta_x$ holds for the SPDE (\ref{SPDE}).
\end{proposition}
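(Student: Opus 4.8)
The plan is to run the correspondence between the SPDE~(\ref{SPDE}) and the SDE~(\ref{SDE}) in the direction opposite to that of Proposition~\ref{prop-2-a}. Let $(\bbb,X,W)$ and $(\bbb',X',W')$ be two martingale solutions to the SPDE~(\ref{SPDE}) with
\begin{align*}
\bbp \circ X(0)^{-1} = \bbp' \circ X'(0)^{-1} = \delta_x,
\end{align*}
so that $X(0) = x$ holds $\bbp$-almost surely and $X'(0) = x$ holds $\bbp'$-almost surely. The goal is to establish that $\bbp \circ (X,\bar{W})^{-1} = \bbp' \circ (X',\bar{W'})^{-1}$ as measures on $\bbw(H) \times \bbw_0(\bar{U})$.

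First I would lift both solutions to the level of the SDE. Defining $Y$ from $(X,W)$ via (\ref{Y-given-by-X}), Lemma~\ref{lemma-XY} shows that $(Y,W)$ is a martingale solution to the SDE~(\ref{SDE}) with $\bbp \circ Y(0)^{-1} = \delta_x \circ \ell^{-1} = \delta_{\ell(x)}$ (the injectivity of $\ell$ giving the last equality) and with $X = \Gamma(Y)$ holding $\bbp$-almost surely. By Remark~\ref{rem-Pi-1} the identity $X = \Gamma(Y)$ is equivalent to $\Pi_1 Y = \Delta(X)$, so that $\Pi_1 Y = \Delta(X)$ holds $\bbp$-almost surely. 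Performing the same construction for $(X',W')$ produces a martingale solution $(Y',W')$ to the SDE~(\ref{SDE}) with $\bbp' \circ Y'(0)^{-1} = \delta_{\ell(x)}$ and $\Pi_1 Y' = \Delta(X')$ $\bbp'$-almost surely.

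Next I would apply the hypothesis. Since $(\bbb,Y,W)$ and $(\bbb',Y',W')$ are weak solutions to the SDE~(\ref{SDE}) sharing the initial law $\delta_{\ell(x)}$, joint uniqueness in law given $\delta_{\ell(x)}$ modulo $\bbv_2$ (Definition~\ref{def-uni-modulo}) yields
\begin{align*}
\bbp \circ (\Pi_1 Y,\bar{W})^{-1} = \bbp' \circ (\Pi_1 Y',\bar{W'})^{-1}
\end{align*}
as measures on $\bbv_1 \times \bbw_0(\bar{U})$. Inserting $\Pi_1 Y = \Delta(X)$ and $\Pi_1 Y' = \Delta(X')$, this becomes $\bbp \circ (\Delta(X),\bar{W})^{-1} = \bbp' \circ (\Delta(X'),\bar{W'})^{-1}$. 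To return to the SPDE I would push this identity forward under the map $(\zeta,w) \mapsto (\Gamma(\zeta),w)$ from $\bbv_1 \times \bbw_0(\bar{U})$ to $\bbw(H) \times \bbw_0(\bar{U})$; since $\Gamma \circ \Delta = \Id$ on $\bbw(H)$, we have $\Gamma(\Delta(X)) = X$ and $\Gamma(\Delta(X')) = X'$, and the push-forward delivers precisely $\bbp \circ (X,\bar{W})^{-1} = \bbp' \circ (X',\bar{W'})^{-1}$, which is joint uniqueness in law given $\delta_x$ for the SPDE~(\ref{SPDE}).

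The only delicate point — and hence the main obstacle — is the bimeasurability of the correspondence $X \leftrightarrow \Pi_1 Y = \Delta(X)$, which is what legitimizes the two push-forwards. By Lemma~\ref{lemma-direct-sum} and Remark~\ref{rem-Pi-1} the restriction $\Gamma|_{\bbv_1} \colon \bbv_1 \to \bbw(H)$ is a continuous linear bijection between the Fr\'{e}chet spaces $\bbv_1 = \bbw(\calh_1)$ and $\bbw(H)$, so the open mapping theorem guarantees that its inverse $\Delta$ is continuous as well; both maps are therefore Borel measurable, and carrying the component $\bar{W}$ along unchanged keeps the joint laws intact throughout. Once this is in place, the remaining steps are routine.
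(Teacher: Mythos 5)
Your proposal is correct and follows essentially the same route as the paper, whose one-line proof (``this is a consequence of Lemma~\ref{lemma-XY}'') unpacks to exactly your argument: lift both SPDE solutions via (\ref{Y-given-by-X}) to weak solutions of (\ref{SDE}) with initial law $\delta_{\ell(x)}$, apply joint uniqueness modulo $\bbv_2$, and push the resulting identity of laws back through $\Gamma$. One small simplification: the continuity of $\Delta$ (hence the open mapping theorem) is not actually needed in this direction, since $X = \Gamma(Y) = \Gamma(\Pi_1 Y)$ $\bbp$-almost surely by linearity of $\Gamma$ and $\ker(\Gamma) = \bbv_2$, so the push-forward under the \emph{continuous} map $(\zeta,w) \mapsto (\Gamma(\zeta),w)$ already suffices.
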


\begin{proof}
This is a consequence of Lemma \ref{lemma-XY}.
\end{proof}

\begin{lemma}\label{lemma-modulo}
Let $x \in H$ be arbitrary, and assume that uniqueness in law given $\delta_x$ holds for (\ref{SPDE}). Then the following statements are equivalent:
\begin{enumerate}
\item[(i)] Joint uniqueness in law given $\delta_x$ holds for the SPDE (\ref{SPDE}).

\item[(ii)] Uniqueness in law given $\delta_{\ell(x)}$ holds for the SDE (\ref{SDE}).
\end{enumerate}
\end{lemma}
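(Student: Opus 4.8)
The plan is to establish the two implications separately, relying on the moving-frame correspondence between solutions of (\ref{SPDE}) and (\ref{SDE}) from Lemmas \ref{lemma-XY} and \ref{lemma-YX} and on the equivalence for the SDE in Theorem \ref{thm-2-Y}. For (ii) $\Rightarrow$ (i) I would first apply Theorem \ref{thm-2-Y} with $y=\ell(x)$ to upgrade (ii) to joint uniqueness in law given $\delta_{\ell(x)}$ for (\ref{SDE}): any two weak solutions with initial law $\delta_{\ell(x)}$ have $(Y,\bar W)$ and $(Y',\bar{W'})$ equal in law on $\bbw(\calh)\times\bbw_0(\bar U)$. Since the projection $\Pi_1:\bbw(\calh)\to\bbv_1$ is continuous, $(\Pi_1 Y,\bar W)$ and $(\Pi_1 Y',\bar{W'})$ are then equal in law, which is joint uniqueness in law given $\delta_{\ell(x)}$ modulo $\bbv_2$ in the sense of Definition \ref{def-uni-modulo}. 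Proposition \ref{prop-2-c} turns this into joint uniqueness in law given $\delta_x$ for (\ref{SPDE}), i.e.\ (i).

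For the converse (i) $\Rightarrow$ (ii) I would take two weak solutions $(\bbb,Y,W)$ and $(\bbb',Y',W')$ of (\ref{SDE}) with $\bbp\circ Y(0)^{-1}=\bbp'\circ Y'(0)^{-1}=\delta_{\ell(x)}$ and pass, by Lemma \ref{lemma-YX}, to the martingale solutions $X:=\Gamma(Y)$ and $X':=\Gamma(Y')$ of (\ref{SPDE}), which by that lemma both have initial law $\delta_x$. Hypothesis (i) then gives that $(X,\bar W)$ and $(X',\bar{W'})$ are equal in law. By Lemma \ref{lemma-YX}(2) each solution satisfies
\begin{align*}
Y=\ell(x)+\int_0^{\bullet} a(s,X)\,ds+\int_0^{\bullet} b(s,X)\,dW,
\end{align*}
so it suffices to produce a single measurable map $\Psi$ on $\bbw(H)\times\bbw_0(\bar U)$ with $Y=\Psi(X,\bar W)$ and $Y'=\Psi(X',\bar{W'})$ almost surely; since $(X,\bar W)$ and $(X',\bar{W'})$ are equal in law, this forces $Y$ and $Y'$ to be equal in law, which is (ii).

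Constructing $\Psi$, that is, showing that the stochastic integral $\int_0^{\bullet} b(s,X)\,dW$ is almost surely a fixed measurable functional of $(X,\bar W)$, is the step I expect to be the main obstacle. The drift $\int_0^{\bullet} a(s,X)\,ds$ contributes a pathwise continuous functional and is harmless. For the Itô integral, only defined up to null sets on each basis, I would use Theorem \ref{thm-integral}, which exhibits (\ref{J-def}) as an almost sure limit of the explicit functionals $I^n$ of (\ref{Jn-def}). Fixing the array of subsequences obtained for the first solution and setting $\tilde I:=\lim_n I^n$ where the limit exists (and $0$ otherwise) gives a single measurable map with $\tilde I(X,\bar W)=\int_0^{\bullet} b(s,X)\,dW$ almost surely, and one takes $\Psi(x,w):=\ell(x)+\int_0^{\bullet} a(s,x)\,ds+\tilde I(x,w)$. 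Because $(X,\bar W)$ and $(X',\bar{W'})$ are equal in law, the sequences $(I^n(X,\bar W))_n$ and $(I^n(X',\bar{W'}))_n$ are equal in law on $\bbw(\calh)^{\bbn}$, so the latter also converges almost surely; the delicate point is to identify its limit $\tilde I(X',\bar{W'})$ with the genuine Itô integral $\int_0^{\bullet} b(s,X')\,dW'$, which holds because the approximations behind Theorem \ref{thm-integral} converge to the Itô integral in probability for every admissible integrand, the subsequence extraction only upgrading this to almost sure convergence. This yields $Y'=\Psi(X',\bar{W'})$ almost surely and completes the argument. The standing hypothesis enters only through Proposition \ref{prop-2-a}, which supplies uniqueness in law given $\delta_{\ell(x)}$ modulo $\bbv_2$ for (\ref{SDE}); one may use it to recast the reconstruction entirely within Definition \ref{def-uni-modulo}.
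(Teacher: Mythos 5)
Your proof follows the paper's own route: the implication (ii) $\Rightarrow$ (i) is exactly the paper's argument (Theorem \ref{thm-2-Y} upgraded to joint uniqueness, then Proposition \ref{prop-2-c}), and (i) $\Rightarrow$ (ii) is the paper's appeal to Lemma \ref{lemma-YX}, which you correctly flesh out by reconstructing $Y$ as a fixed measurable functional of $(X,\bar{W})$ via Theorem \ref{thm-integral} --- precisely the step the paper leaves implicit. The only blemish is your closing remark that the standing hypothesis enters through Proposition \ref{prop-2-a}: in your argument it is in fact not needed in either direction (it is implied by (i), and unused in (ii) $\Rightarrow$ (i)), but this does not affect correctness.
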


\begin{proof}
(i) $\Rightarrow$ (ii): This implication is a consequence of Lemma \ref{lemma-YX}.

\noindent (ii) $\Rightarrow$ (i): By Theorem \ref{thm-2-Y} joint uniqueness in law given $\delta_{\ell(x)}$ holds for the SDE (\ref{SDE}). Hence, this implication is a consequence of Proposition \ref{prop-2-c}.
\end{proof}

Let us sum up the previous findings (Propositions \ref{prop-2-a}, \ref{prop-2-c} and Lemma \ref{lemma-modulo}). In contrast to pathwise uniqueness and joint uniqueness in law, uniqueness in law for the SPDE (\ref{SPDE}) does not transfer to the SDE (\ref{SDE}); it only transfers modulo $\bbv_2$. Although joint uniqueness modulo $\bbv_2$ for the SDE (\ref{SDE}) implies joint uniqueness for the SPDE (\ref{SPDE}), we have seen that we necessarily need uniqueness in law for the SDE (\ref{SDE}) in order to be able to deduce joint uniqueness in law for the SPDE (\ref{SPDE}). In order to overcome these difficulties, consider the $\calh$-valued SPDE
\begin{align}\label{SPDE-Z}
dZ_t = ( \cala Z_t + \hat{\alpha}(t,Z) ) dt + \hat{\sigma}(t,Z) dW(t)
\end{align}
with coefficients
\begin{align*}
&\hat{\alpha} : \mathbb{R}_+ \times \mathbb{W}(\calh) \rightarrow \calh, \quad \hat{\alpha}(t,w) := \ell \alpha(t,\pi w),
\\ &\hat{\sigma} : \mathbb{R}_+ \times \mathbb{W}(\calh) \rightarrow L_2(U,\calh), \quad \hat{\sigma}(t,w) := \ell \sigma(t,\pi w),
\end{align*}
and where $\cala$ is the generator of the $C_0$-group $(U_t)_{t \in \bbr}$. Note that $\hat{\alpha}$ and $\hat{\sigma}$ satisfy the corresponding conditions from Assumption \ref{ass-1}.

\begin{lemma}\label{lemma-larger-space}
Let $\xi : \Omega \to H$ be a $\calf_0$-measurable random variable, and let $(Z,W)$ be a martingale solution to the SPDE (\ref{SPDE-Z}) with $Z(0) = \ell \xi$. Then $(X,W)$, where $X := \pi Z$, is a martingale solution to the SPDE (\ref{SPDE}) such that $\bbp(X(0)=\xi) = 1$.
\end{lemma}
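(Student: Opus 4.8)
The plan is to apply the orthogonal projection $\pi$ to the mild integral equation satisfied by $Z$ and to verify that the resulting identity is precisely the defining equation of a martingale solution to (\ref{SPDE}). Since $(Z,W)$ is a martingale solution to (\ref{SPDE-Z}), Definition \ref{def-martingal-sol} (read with the group $(U_t)_{t \in \bbr}$, its generator $\cala$, and the coefficients $\hat{\alpha},\hat{\sigma}$) gives $\bbp$-almost surely, for all $t \in \bbr_+$,
\[
Z(t) = U_t Z(0) + \int_0^t U_{t-s} \hat{\alpha}(s,Z)\,ds + \int_0^t U_{t-s} \hat{\sigma}(s,Z)\,dW(s),
\]
together with the pathwise finiteness of $\int_0^t \| \hat{\alpha}(s,Z) \|\,ds$ and $\int_0^t \| \hat{\sigma}(s,Z) \|_{L_2(U,\calh)}^2\,ds$. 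Setting $X := \pi Z$, continuity of the paths and adaptedness of $X$ are inherited from $Z$ because $\pi \in L(\calh,H)$ is bounded and linear; moreover $Z(0) = \ell \xi$ together with $\pi \ell = \Id$ yields $X(0) = \pi \ell \xi = \xi$, hence $\bbp(X(0) = \xi) = 1$.

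Next I would transfer the integrability conditions using that $\ell$ is isometric. From $\hat{\alpha}(s,Z) = \ell \alpha(s,\pi Z) = \ell \alpha(s,X)$ we get $\| \alpha(s,X) \| = \| \hat{\alpha}(s,Z) \|$, and since composing a Hilbert--Schmidt operator with the isometry $\ell$ preserves the Hilbert--Schmidt norm, $\| \sigma(s,X) \|_{L_2(U,H)} = \| \ell \sigma(s,X) \|_{L_2(U,\calh)} = \| \hat{\sigma}(s,Z) \|_{L_2(U,\calh)}$. Thus the integrability conditions required of $(X,W)$ follow from those for $(Z,W)$.

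The core of the argument is to apply $\pi$ to the displayed mild equation and commute it past the two integrals. For the leading term, (\ref{diagram-commutes}) gives $\pi U_t Z(0) = \pi U_t \ell \xi = S_t \xi = S_t X(0)$. For the drift, $\pi$ passes through the Bochner integral, and the integrand becomes $\pi U_{t-s} \hat{\alpha}(s,Z) = \pi U_{t-s} \ell \alpha(s,X) = S_{t-s} \alpha(s,X)$ by (\ref{diagram-commutes}); hence this term equals $\int_0^t S_{t-s} \alpha(s,X)\,ds$. For the stochastic term I would use that the fixed bounded operator $\pi$ commutes with the stochastic integral, so that $\pi \int_0^t U_{t-s} \hat{\sigma}(s,Z)\,dW(s) = \int_0^t \pi U_{t-s} \hat{\sigma}(s,Z)\,dW(s)$, and again $\pi U_{t-s} \hat{\sigma}(s,Z) = \pi U_{t-s} \ell \sigma(s,X) = S_{t-s} \sigma(s,X)$. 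Combining the three pieces yields $X(t) = S_t X(0) + \int_0^t S_{t-s} \alpha(s,X)\,ds + \int_0^t S_{t-s} \sigma(s,X)\,dW(s)$, which is the mild equation for $(X,W)$.

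The main obstacle I anticipate is the justification that $\pi$ passes through the stochastic integral. Although $\pi$ is a deterministic, time-independent bounded operator — so this is the standard commutation property of the It\^{o} integral, obtained by approximating the integrand with elementary processes — a little care is needed because the integrand $s \mapsto U_{t-s} \hat{\sigma}(s,Z)$ depends on the upper limit $t$. I would treat $t$ as fixed, invoke the commutation property for the ordinary stochastic integral (cf. \cite[Sec. 2.5]{Prevot-Roeckner}), and then pass to a version valid simultaneously for all $t \in \bbr_+$ by continuity of both sides of the identity in $t$.
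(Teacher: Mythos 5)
Your proof is correct and follows essentially the same route as the paper: apply $\pi$ to the mild equation for $Z$, commute it through both integrals, and use $\pi U_t \ell = S_t$ and $\pi\ell = \Id_H$. The paper's proof is just a terser version of your computation; your extra care with the integrability transfer (via isometry of $\ell$) and with commuting $\pi$ past the stochastic integral fills in details the paper leaves implicit.
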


\begin{proof}
Taking into account (\ref{diagram-commutes}), for each $t \in \bbr_+$ we have $\bbp$-almost surely
\begin{align*}
X(t) &= \pi \bigg( U_t Z(0) + \int_0^t U_{t-s} \hat{\alpha}(s,Z) ds + \int_0^t U_{t-s} \hat{\sigma}(s,Z) dW(s) \bigg)
\\ &= \pi U_t \ell \xi + \int_0^t \pi U_{t-s} \ell \alpha(s,\pi Z) ds + \int_0^t \pi U_{t-s} \ell \sigma(s,\pi Z) dW(s)
\\ &= S_t \xi + \int_0^t S_{t-s} \alpha(s,X) ds + \int_0^t S_{t-s} \sigma(s,X) dW(s),
\end{align*}
completing the proof.
\end{proof}

From now on, we assume, as in Theorem \ref{thm-2}, that the $C_0$-semigroup $(S_t)_{t \geq 0}$ can be extended to a $C_0$-group $(U_t)_{t \in \bbr}$ on $H$. More precisely, we assume there exists a $C_0$-group $(U_t)_{t \in \bbr}$ on $H$ such that $S_t = U_t$ for all $t \geq 0$. In view of Lemma \ref{lemma-larger-space}, this does not mean a severe restriction, because otherwise the SPDE can be realized on a larger state space, where this property is fulfilled. 

\begin{remark}\label{rem-v2}
In the present situation, Assumption \ref{ass-2} is satisfied with $\calh = H$ and $\ell = \pi = \Id_H$. Therefore, we have $\calh_2 = \ker(\pi) = \{ 0 \}$, and it follows that $\Gamma = U$, and in particular $\bbv_2 = \ker(\Gamma) = \{ 0 \}$.
\end{remark}

\begin{proposition}\label{prop-2-aa}
Let $x \in H$ be arbitrary, and assume that uniqueness in law given $\delta_x$ holds for the SPDE (\ref{SPDE}). Then uniqueness in law given $\delta_x$ holds for the SDE (\ref{SDE}).
\end{proposition}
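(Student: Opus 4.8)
The plan is to reduce the claim to Proposition \ref{prop-2-a} and to exploit the fact that in the present situation the direct sum decomposition $\bbw(\calh) = \bbv_1 \oplus \bbv_2$ from Lemma \ref{lemma-direct-sum} degenerates. First I would invoke Proposition \ref{prop-2-a}: since uniqueness in law given $\delta_x$ is assumed to hold for the SPDE (\ref{SPDE}), that proposition yields that uniqueness in law given $\delta_{\ell(x)}$ modulo $\bbv_2$ holds for the SDE (\ref{SDE}). The whole point is then to check that in the present setting this latter statement is nothing but ordinary uniqueness in law for the SDE.

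To this end I would specialize the data by means of Remark \ref{rem-v2}. There we have $\calh = H$ and $\ell = \pi = \Id_H$, so that $\ell(x) = x$ and $\delta_{\ell(x)} = \delta_x$ as measures on $(\calh,\calb(\calh))$. Moreover $\calh_2 = \ker(\pi) = \{ 0 \}$, whence $\bbv_2 = \ker(\Gamma) = \{ 0 \}$, while $\bbv_1 = \bbw(\calh_1) = \bbw(\calh)$ by Lemma \ref{lemma-direct-sum}. In particular the projection $\Pi_1 : \bbw(\calh) \to \bbv_1$ onto the first coordinate is the identity map on $\bbw(\calh)$.

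The final step is to feed this degeneracy into Definition \ref{def-uni-modulo}. For two weak solutions $(\bbb,Y,W)$ and $(\bbb',Y',W')$ with $\bbp \circ Y(0)^{-1} = \bbp' \circ Y'(0)^{-1} = \delta_x$, the defining condition of uniqueness in law modulo $\bbv_2$, namely $\bbp \circ (\Pi_1 Y)^{-1} = \bbp' \circ (\Pi_1 Y')^{-1}$ on $(\bbv_1,\calb(\bbv_1))$, reduces, because $\Pi_1 = \Id$ and $\bbv_1 = \bbw(\calh)$, to the equality $\bbp \circ Y^{-1} = \bbp' \circ (Y')^{-1}$ on $(\bbw(\calh),\calb(\bbw(\calh)))$. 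This is precisely uniqueness in law given $\delta_x$ for the SDE (\ref{SDE}), which closes the argument.

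I do not expect any genuine obstacle here; the only point requiring care is the bookkeeping showing that $\bbv_2 = \{ 0 \}$ forces $\Pi_1$ to be the identity, so that the qualifier ``modulo $\bbv_2$'' becomes vacuous. This is exactly the mechanism by which, in the present group setting, uniqueness in law transfers from the SPDE to the SDE, in contrast to the general case discussed after Lemma \ref{lemma-modulo}.
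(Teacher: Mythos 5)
Your proposal is correct and follows essentially the same route as the paper, whose entire proof reads ``This is a consequence of Proposition \ref{prop-2-a} and Remark \ref{rem-v2}.'' You have merely made explicit the bookkeeping the paper leaves implicit, namely that $\ell = \pi = \Id_H$ gives $\delta_{\ell(x)} = \delta_x$, and that $\bbv_2 = \{0\}$ forces $\bbv_1 = \bbw(\calh)$ and $\Pi_1 = \Id$, so that uniqueness in law modulo $\bbv_2$ from Definition \ref{def-uni-modulo} coincides with ordinary uniqueness in law for the SDE (\ref{SDE}).
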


\begin{proof}
This is a consequence of Proposition \ref{prop-2-a} and Remark \ref{rem-v2}.
\end{proof}

Now, the proof of Theorem \ref{thm-2} follows from combining Theorem \ref{thm-2-Y}, Proposition \ref{prop-2-aa} and Proposition \ref{prop-2-c}. Finally, let us provide the proof of Theorem \ref{thm-3}. The equivalences (i) $\Leftrightarrow$ (ii) $\Leftrightarrow$ (iii) follow from \cite[Thm. 1.1]{Tappe-YW} and its proof, the implication (i) $\Rightarrow$ (iv) follows from Remark \ref{rem-unique-mild-implies-joint}, and the implication (iv) $\Rightarrow$ (ii) is a consequence of Theorem \ref{thm-1}.

\section{Examples}\label{sec-examples}

In this section we provide some examples, illustrating our previous findings.

\begin{example}
Let $H = U = \ell^2(\bbn)$ be the Hilbert space consisting of all sequences $h = (h_k)_{k \in \bbn} \subset \bbr$ such that $\sum_{k \in \bbn} |h_k|^2 < \infty$. As in \cite[Example 2.5.4]{Pazy}, let $(S_t)_{t \geq 0}$ be the semigroup given by
\begin{align*}
S_t h := (e^{-kt} h_k)_{k \in \bbn} \quad \text{for $t \geq 0$ and $h = (h_k)_{k \in \bbn} \in H$.}
\end{align*}
Then $(S_t)_{t \geq 0}$ is a $C_0$-semigroup on $H$ with infinitesimal generator $A : \cald(A) \subset H \to H$ defined on the domain
\begin{align*}
\cald(A) = \{ (h_k)_{k \in \bbn} \in H : (k h_k)_{k \in \bbn} \in H \},
\end{align*}
and given by
\begin{align*}
Ah = (-k h_k)_{k \in \bbn} \quad \text{for $h = (h_k)_{k \in \bbn} \in \cald(A)$.}
\end{align*}
Note that $(S_t)_{t \geq 0}$ is a semigroup of contractions. Therefore, by Remark \ref{rem-pseudo-contractive} we deduce that Assumption \ref{ass-2} is fulfilled. Now, we consider the SPDE
\begin{align}\label{SPDE-Tanaka}
dX(t) = A X(t) dt + \sigma(X(t)) dW(t), \quad X(0) = 0,
\end{align}
where the volatility $\sigma : H \to L_2(H)$ is defined as
\begin{align*}
\sigma(h) := \sum_{k=1}^{\infty} \frac{\sgn(h_k)}{k} \la e_k,\cdot \ra e_k, \quad h \in H.
\end{align*}
Here $(e_k)_{k \in \bbn}$ denotes the canonical orthonormal basis of $H$, and $\sgn : \bbr \to \{ 1,-1 \}$ is the sign function defined as
\begin{align*}
\sgn(x) :=
\begin{cases}
1, & \text{if $x > 0$,}
\\ -1, & \text{if $x \leq 0$.}
\end{cases}
\end{align*}
Note that for each $h \in H$ we have indeed $\sigma(h) \in L_2(H)$, because
\begin{align*}
\sum_{k=1}^{\infty} \| \sigma(h)e_k \|^2 = \sum_{k=1}^{\infty} \frac{1}{k^2} < \infty.
\end{align*}
The SPDE (\ref{SPDE-Tanaka}) may be regarded as an infinite dimensional version of Tanaka's equation with linear drift. Using our previous findings, we obtain the following results:
\begin{enumerate}
\item Uniqueness in law holds for the SPDE (\ref{SPDE-Tanaka}).

\item Pathwise uniqueness does not hold for the SPDE (\ref{SPDE-Tanaka}).

\item The SPDE (\ref{SPDE-Tanaka}) does not have a mild solution in the sense of Definition \ref{def-mild-sol}.
\end{enumerate}
In order to prove these statements, let $(X,W)$ be a martingale solution to the SPDE (\ref{SPDE-Tanaka}). Then we have
\begin{align*}
X(t) = \int_0^t S_{t-s} \sigma(X(s)) dW(s), \quad t \in \bbr_+.
\end{align*}
We define the $H$-valued process $B$ as
\begin{align*}
B_t := \int_0^t \sigma(X(s)) dW(s), \quad t \in \bbr_+.
\end{align*}
Then $B$ is a $Q$-Wiener process with covariance operator $Q \in L_1^{++}(H)$ given by
\begin{align*}
Q = \sum_{k=1}^{\infty} \frac{1}{k^2} \la e_k,\cdot \ra e_k.
\end{align*}
Indeed, noting that $\sigma(h)$ is self-adjoint for each $h \in H$, by \cite[Thm. 2.4]{Atma-book} we have
\begin{align*}
\langle\!\langle B \rangle\!\rangle_t = Qt, \quad t \in \bbr_+.
\end{align*}
Therefore, by L\'{e}vy's theorem (see \cite[Thm. 2.6]{Atma-book}) the process $B$ is a $Q$-Wiener process. Defining the self-adjoint Hilbert Schmidt operator $J \in L_2^{++}(U)$ as 
\begin{align*}
J := \sum_{k=1}^{\infty} \frac{1}{k} \la e_k,\cdot \ra e_k,
\end{align*}
we have $Q = J^2$. Now, let
\begin{align*}
\bar{W} := \sum_{k=1}^{\infty} \beta_k J e_k = \sum_{k=1}^{\infty} \frac{1}{k} \beta_k e_k
\end{align*}
be the associated $Q$-Wiener process. Then by \cite[Prop. 2.4.5]{Liu-Roeckner} we have
\begin{align*}
X(t) &= \int_0^t S_{t-s} \sigma(X(s)) \circ J^{-1} d \bar{W}(s)
\\ &= \sum_{k=1}^{\infty} \bigg( \frac{e^{-kt}}{k} \int_0^t e^{ks} \sgn(X^k(s)) d \beta^k(s) \bigg) e_k, \quad t \in \bbr_+,
\end{align*}
and hence, for each $k \in \bbn$ we obtain
\begin{align*}
X^k(t) = \frac{e^{-kt}}{k} \int_0^t e^{ks} \sgn(X^k(s)) d \beta^k(s), \quad t \in \bbr_+.
\end{align*}
Therefore, pathwise uniqueness does not hold for the SPDE (\ref{SPDE-Tanaka}), because $(-X,W)$ is also a martingale solution. Using \cite[Prop. 2.4.5]{Liu-Roeckner} again, a similar calculation as above shows that for each $k \in \bbn$ we have
\begin{align*}
B^k(t) = \frac{1}{k} \int_0^t \sgn(X^k(s)) d \beta^k(s), \quad t \in \bbr_+.
\end{align*}
Hence, using the associativity of the It\^{o} integral and integration by parts, for each $k \in \bbn$ we obtain
\begin{align*}
X^k(t) &= e^{-kt} \int_0^t e^{ks} dB^k(s)
\\ &= e^{-kt} \bigg( e^{kt} B^k(t) - \int_0^t B^k(s) k e^{ks} ds \bigg)
\\ &= B^k(t) - k e^{-kt} \int_0^t e^{ks} B^k(s) ds, \quad t \in \bbr_+.
\end{align*}
Therefore, we have $X = \Phi(B)$ with a measurable map $\Phi : \bbw(H) \to \bbw(H)$, proving uniqueness in law. Consequently, by Theorem \ref{thm-1} the SPDE (\ref{SPDE-Tanaka}) does not have a mild solution.
\end{example}

Now, we consider SPDEs of the type
\begin{align}\label{SPDE-prog}
dX(t) = \big( A X(t) + \alpha(t,X(t)) \big) dt + \sigma(t,X(t)) dW(t)
\end{align}
with progressively measurable coefficients
\begin{align}\label{prog-coeff}
\alpha : \bbr_+ \times H \times \Omega \to H \quad \text{and} \quad \sigma : \bbr_+ \times H \times \Omega \to L_2(U,H).
\end{align}

\begin{proposition}\label{prop-delta-uniq}
Suppose that $(S_t)_{t \geq 0}$ can be extended to a unitary $C_0$-group $(U_t)_{t \in \bbr}$. Furthermore, we assume that for each $n \in \bbn$ there is a measurable function $L(n) : \bbr_+ \to \bbr_+$ with $L_T^*(n) := \sup_{t \in [0,T]} L_t(n) < \infty$ for each $T \in \bbr_+$ such that for all $(t,\omega) \in \bbr_+ \times \Omega$, all $n \in \bbn$ and all $x,y \in H$ with $\| x \|_H, \| y \|_H \leq n$ we have
\begin{equation}\label{mon}
\begin{aligned}
&2 \la x-y,\alpha(t,x,\omega)-\alpha(t,y,\omega) \ra_H
\\ &\quad + \| \sigma(t,x,\omega)-\sigma(t,y,\omega) \|_{L_2(U,H)}^2 \leq L_t(n) \| x-y \|_H^2.
\end{aligned}
\end{equation}
Then joint $\delta$-uniqueness in law holds for the SPDE (\ref{SPDE-prog}).
\end{proposition}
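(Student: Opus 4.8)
The plan is to derive joint $\delta$-uniqueness in law from $\delta$-pathwise uniqueness, the latter being the genuine content of the local monotonicity condition (\ref{mon}). The unitary group assumption places us in the setting of Remark \ref{rem-v2}, where $\calh = H$, $\ell = \pi = \Id_H$ and $\Gamma = U$, so that $\bbv_2 = \{ 0 \}$; its decisive benefit is that it turns a mild solution into a genuine It\^{o} process. Indeed, if $(X,W)$ is a martingale solution to (\ref{SPDE-prog}), then $Y := U_{-\bullet} X$ satisfies $\bbp$-almost surely
\begin{align*}
Y(t) = X(0) + \int_0^t U_{-s} \alpha(s,X(s)) ds + \int_0^t U_{-s} \sigma(s,X(s)) dW(s), \quad t \in \bbr_+,
\end{align*}
so that $Y$ is a continuous semimartingale solving the SDE (\ref{SDE}), while $X = U_{\bullet} Y$ with $\| X(t) \|_H = \| Y(t) \|_H$ by unitarity. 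This is precisely the passage from (\ref{SPDE-prog}) to (\ref{SDE}) via the moving frame of Section \ref{sec-proof}.

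First I would establish $\delta$-pathwise uniqueness. Given two martingale solutions $(X,W)$ and $(X',W)$ on a common stochastic basis driven by the same $W$ with $\bbp(X(0) = X'(0)) = 1$, I set $Y := U_{-\bullet} X$ and $Y' := U_{-\bullet} X'$ and apply It\^{o}'s formula to $\| Y - Y' \|_H^2$. Since each $U_{-s}$ is a unitary operator, so that $\la U_{-s} h, U_{-s} k \ra_H = \la h,k \ra_H$ and $\| U_{-s} \Phi \|_{L_2(U,H)} = \| \Phi \|_{L_2(U,H)}$, the finite-variation part of the semimartingale $\| Y(t) - Y'(t) \|_H^2$ is
\begin{align*}
\int_0^t \Big( & 2 \la X(s) - X'(s), \alpha(s,X(s)) - \alpha(s,X'(s)) \ra_H \\
& \quad + \| \sigma(s,X(s)) - \sigma(s,X'(s)) \|_{L_2(U,H)}^2 \Big) ds.
\end{align*}
Introducing the stopping times $\tau_n := \inf \{ t \geq 0 : \| X(t) \|_H \vee \| X'(t) \|_H > n \}$ and invoking (\ref{mon}) pointwise in $\omega$, this integrand is dominated on $\{ s \leq \tau_n \}$ by $L_s(n) \| X(s) - X'(s) \|_H^2 = L_s(n) \| Y(s) - Y'(s) \|_H^2$. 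Taking expectations after stopping at $\tau_n$, so that the stochastic integral part is a true martingale, and using $L_T^*(n) < \infty$, Gronwall's inequality gives $\bbe \| Y(t \wedge \tau_n) - Y'(t \wedge \tau_n) \|_H^2 = 0$; letting $n \to \infty$, with $\tau_n \to \infty$ by path continuity, yields $Y = Y'$ and hence $X = X'$, $\bbp$-almost surely.

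It remains to upgrade pathwise uniqueness to joint uniqueness in law, and this is where I expect the main difficulty. If no martingale solution with initial value $\delta_x$ exists the claim is vacuous; otherwise weak existence holds, and the forward Yamada--Watanabe theorem forces every such solution to agree $\bbp$-almost surely with a single measurable functional of its driving $Q$-Wiener process, whence for two solutions $(\bbb,X,W)$ and $(\bbb',X',W')$ with $X(0) = X'(0) = x$ the pairs $(X,\bar{W})$ and $(X',\bar{W'})$ are images of $\bar{W}$ and $\bar{W'}$ under the same map; as both driving processes have law $\mathbb{P}^Q$, the laws of $(X,\bar{W})$ and $(X',\bar{W'})$ coincide. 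The obstacle is that the coefficients (\ref{prog-coeff}) are genuinely random, whereas the uniqueness concepts and the abstract transfer results of Sections \ref{sec-framework} and \ref{sec-proof} are phrased for deterministic coefficients; the cleanest way to bridge this gap is again through the SDE (\ref{SDE}) for $Y = U_{-\bullet} X$, to whose coefficients the monotonicity condition transfers by the isometry computation above. For the SDE, pathwise uniqueness yields uniqueness in law, Theorem \ref{thm-2-Y} upgrades this to joint uniqueness in law, and Proposition \ref{prop-2-c} --- whose clause ``modulo $\bbv_2$'' is empty because $\bbv_2 = \{ 0 \}$ --- transports the conclusion back to (\ref{SPDE-prog}), the measurability subtleties for progressively measurable coefficients being exactly those handled in \cite{Rehmeier}. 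For the final transfer one may alternatively invoke Corollary \ref{cor-2}, applicable here since the semigroup extends to a $C_0$-group on $H$.
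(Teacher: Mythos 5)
Your proposal is correct and is essentially the paper's own argument: the paper likewise passes to the $H$-valued SDE (\ref{SDE-prog}) with coefficients $\bar{\alpha}(t,x,\omega) = U_t^{*}\alpha(t,U_t x,\omega)$ and $\bar{\sigma}(t,x,\omega) = U_t^{*}\sigma(t,U_t x,\omega)$, transfers the monotonicity condition (\ref{mon}) by unitarity, and derives $\delta$-pathwise uniqueness via It\^{o}'s formula, localization along stopping times and Gronwall's lemma, exactly as you do. Your concluding chain (pathwise uniqueness $\Rightarrow$ uniqueness in law for the SDE $\Rightarrow$ joint uniqueness in law by Theorem \ref{thm-2-Y} $\Rightarrow$ joint $\delta$-uniqueness for the SPDE by Proposition \ref{prop-2-c}, with $\bbv_2 = \{0\}$ by Remark \ref{rem-v2}) is merely the paper's final step --- Proposition \ref{prop-1-c} followed by Theorem \ref{thm-2} --- unwound into the very ingredients from which Theorem \ref{thm-2} is proved.
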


\begin{proof}
Since the group $(U_t)_{t \in \mathbb{R}}$ is unitary, we have $U_{-t} = U_t^*$ and $U_t$ is an isometry for every $t \in \bbr$. We consider the $H$-valued SDE
\begin{align}\label{SDE-prog}
dY(t) = \bar{\alpha}(t,Y(t))dt + \bar{\sigma}(t,X(t)) dW(t),
\end{align}
where the progressively measurable coefficients
\begin{align*}
\bar{\alpha} : \bbr_+ \times H \times \Omega \to H \quad \text{and} \quad \bar{\sigma} : \bbr_+ \times H \times \Omega \to L_2(U,H)
\end{align*}
are given by
\begin{align*}
\bar{\alpha}(t,x,\omega) := U_t^* \alpha(t,U_t x,\omega) \quad \text{and} \quad \bar{\sigma}(t,x,\omega) := U_t^* \sigma(t,U_t x,\omega).
\end{align*}
Then $\delta$-pathwise uniqueness holds for the SDE (\ref{SDE-prog}). Indeed, by (\ref{mon}) for all $(t,\omega) \in \bbr_+ \times \Omega$, all $n \in \bbn$ and all $x,y \in H$ with $\| x \|_H, \| y \|_H \leq n$ we have
\begin{equation}\label{mon-2}
\begin{aligned}
&2 \la x-y,\bar{\alpha}(t,x,\omega)-\bar{\alpha}(t,y,\omega) \ra_H + \| \bar{\sigma}(t,x,\omega)-\bar{\sigma}(t,y,\omega) \|_{L_2(U,H)}^2
\\ &= 2 \la x-y,U_t^* \alpha(t,U_t x,\omega)-U_t^* \alpha(t,U_t y,\omega) \ra_H
\\ &\quad + \| U_t^* \sigma(t,U_t x,\omega)-U_t^* \sigma(t,U_t y,\omega) \|_{L_2(U,H)}^2
\\ &= 2 \la U_t x - U_t y, \alpha(t,U_t x,\omega) - \alpha(t,U_t y,\omega) \ra_H
\\ &\quad + \| \sigma(t,U_t x,\omega) - \sigma(t,U_t y,\omega) \|_{L_2(U,H)}^2
\\ &\leq L_t(n) \| U_t x - U_t y \|_H^2 = L_t(n) \| x-y \|_H^2.
\end{aligned}
\end{equation}
Now, let $(Y,W)$ and $(Y',W)$ be two weak solutions to the SDE (\ref{SDE-prog}) on the same stochastic basis $\bbb$ such that $\bbp(Y(0) = Y'(0)) = 1$ and $\delta_y$ is the distribution of $Y(0)$ for some $y \in H$. A standard procedure using It\^{o}'s formula (see, for example \cite[Thm. 4.2.5]{Liu-Roeckner}) and inequality (\ref{mon-2}) shows for any fixed $T \in \bbr_+$ and each $n \in \bbn$ with $n \geq \| y \|_H$ the inequality
\begin{align*}
&\bbe \big[ \| Y(t \wedge T_n) - Y'(t \wedge T_n) \|_H^2 \big]
\\ &\leq L_T^*(n) \int_0^t \bbe \big[ \| Y(s \wedge T_n) - Y'(s \wedge T_n) \|_H^2 \big] ds, \quad t \in [0,T],
\end{align*}
where $(T_n)_{n \in \bbn}$ denotes the localizing sequence of stopping times
\begin{align*}
T_n := \inf \{ t \in \bbr_+ : \| Y(t) \|_H \geq n \} \wedge \inf \{ t \in \bbr_+ : \| Y'(t) \|_H \geq n \}.
\end{align*}
Therefore, by Gronwall's lemma we obtain $\bbp$-almost surely $Y = Y'$. Consequently, taking into account Remark \ref{rem-start-in-range}, by Proposition \ref{prop-1-c} we deduce that that $\delta$-pathwise uniqueness holds for the SPDE (\ref{SPDE-prog}). Therefore, by Theorem \ref{thm-2} joint $\delta$-uniqueness in law holds for the SPDE (\ref{SPDE-prog}).
\end{proof}

\begin{example}
We consider a SPDE of the form
\begin{align}\label{SPDE-diff}
dX(t) = \bigg( \frac{d}{d x} X(t) + \alpha(t,X(t)) \bigg) dt + \sigma(t,X(t)) dW(t)
\end{align}
on the state space $H = L^2(\bbr)$ with progressively measurable coefficients (\ref{prog-coeff}). The differential operator $\frac{d}{dx}$ is generated by the translation semigroup $(S_t)_{t \geq 0}$ given by
\begin{align*}
S_t h := h(t + \bullet) \quad \text{for $t \geq 0$ and $h \in H$.}
\end{align*}
The semigroup $(S_t)_{t \geq 0}$ extends to a unitary $C_0$-group $(U_t)_{t \in \bbr}$ by setting
\begin{align*}
U_t h := h(t + \bullet) \quad \text{for $t \in \bbr$ and $h \in H$.}
\end{align*}
Note that for all $t \in \bbr_+$ and all $h,g \in H$ we have
\begin{align*}
\la U_t h,g \ra = \int_{\bbr} h(x+t)g(x) dx = \int_{\bbr} h(x)g(x-t) dx = \la h, U_{-t} g \ra,
\end{align*}
showing that the group $(U_t)_{t \in \bbr}$ is indeed unitary. Suppose that the coefficients satisfy condition (\ref{mon}). Then, according to Proposition \ref{prop-delta-uniq}, joint $\delta$-uniqueness in law holds for the SPDE (\ref{SPDE-diff}).
\end{example}

Before we proceed, let us emphasize that all the results of this paper also hold true for SPDEs of the type
\begin{align}\label{SPDE-trace-class}
dX(t) = (A X(t) + \alpha(t,X)) dt + \bar{\sigma}(t,X)d\bar{W}(t).
\end{align}
driven by a trace class Wiener process $\bar{W}$ with some covariance operator $Q \in L_1^{++}(U)$ and path-dependent coefficients $\alpha$ and $\bar{\sigma}$ such that $\bar{\sigma}$ is $L_2(Q^{1/2}(U),H)$-valued, and $\alpha$ and $\bar{\sigma}$ satisfy the corresponding conditions from Assumption \ref{ass-1}. Indeed, setting $J := Q^{1/2}$ we have $J \in L_2^{++}(U)$. There are an orthonormal basis $(e_k)_{k \in \bbn}$ of $U$ and a sequence $(\lambda_k)_{k \in \bbn} \subset (0,\infty)$ such that $\lambda_k \downarrow 0$ and
\begin{align*}
Q e_k = \lambda_k e_k, \quad k \in \bbn.
\end{align*}
The $Q$-Wiener process $\bar{W}$ admits the series representation
\begin{align*}
\bar{W} = \sum_{k=1}^{\infty} \sqrt{\lambda_k} \beta_k e_k = \sum_{k=1}^{\infty} \beta_k Je_k,
\end{align*}
where the sequence $(\beta_k)_{k \in \bbn}$ given by
\begin{align*}
\beta_k = \frac{1}{\sqrt{\lambda_k}} \la \bar{W},e_k \ra, \quad k \in \bbn
\end{align*}
consists of independent Wiener processes; see, for example \cite[Prop. 4.3]{Da_Prato}. By Remark \ref{rem-integral-cylindrical} the stochastic integral is given by
\begin{align*}
\int_0^t S_{t-s} \sigma(s,X) d \bar{W}(s) = \int_0^t S_{t-s} \sigma(s,X) \circ J \, dW(s), \quad t \in \bbr_+,
\end{align*}
where $W$ denotes the $\bbr^{\infty}$-standard Wiener process $W = (\beta_k)_{k \in \bbn}$.
Therefore, we can rewrite the SPDE (\ref{SPDE-trace-class}) equivalently as
\begin{align*}
dX(t) = (A X(t) + \alpha(t,X)) dt + \bar{\sigma}(t,X) \circ Q^{1/2} dW(t),
\end{align*}
which is of the type (\ref{SPDE}) with the $L_2(U,H)$-valued mapping $\sigma$ given by
\begin{align*}
\sigma(t,w) = \bar{\sigma}(t,w) \circ Q^{1/2}, \quad (t,w) \in \bbr_+ \times \bbw(H).
\end{align*}
There are several well-known situations, where Theorem \ref{thm-3} applies, and, as a consequence, there exists a unique mild solution and joint uniqueness in law holds. Let us outline some of these situations:
\begin{enumerate}
\item Let $W$ be a standard $\bbr^{\infty}$-Wiener process. Assume that Hypotheses 7.2 and condition (7.31) from \cite{Da_Prato} are satisfied. Then Theorem \ref{thm-3} applies to the SPDE (\ref{SPDE}).

\item Let $W$ be a standard $\bbr^{\infty}$-Wiener process. Assume that $(S_t)_{t \geq 0}$ is a semigroup of contractions, and that the coefficients $\alpha$ and $\sigma$ satisfy local monotonicity and coercivity conditions; see \cite[Thm. 2.6]{Tappe-mon} for details. Then Theorem \ref{thm-3} applies to the SPDE (\ref{SPDE}).

\item Let $\bar{W}$ be a trace class Wiener process. If Lipschitz and linear growth conditions are fulfilled (see, for example Hypothesis 7.1 in \cite{Da_Prato}), then Theorem \ref{thm-3} applies to the SPDE (\ref{SPDE-trace-class}). Slightly more general, we can also impose locally Lipschitz and linear growth conditions; see, for example \cite{Tappe-refine}.

\item Let $\bar{W}$ be a trace class Wiener process. If the semigroup $(S_t)_{t \geq 0}$ is compact and the coefficients $\alpha$ and $\sigma$ are continuous and satisfy the linear growth condition, then the existence of martingale solutions to (\ref{SPDE-trace-class}) holds true; see, for example \cite[Thm. 3.14]{Atma-book}. If, moreover, uniqueness in law holds true, then Theorem \ref{thm-3} applies to the SPDE (\ref{SPDE-trace-class}); cf. also \cite[Sec. 4]{Tappe-YW}.
\end{enumerate}


\begin{thebibliography}{20}
 
\bibitem{BLP} Barczy, M., Li, Z., Pap, G. (2015):
  Yamada-Watanabe results for stochastic differential equations with jumps. \textit{International Journal of Stochastic Analysis}, vol. 2015, Article ID 460472, 23 pages.

\bibitem{Cherny} Cherny, A.~S. (2002):
  On the uniqueness in law and the pathwise uniqueness for stochastic differential equations. \textit{Theory of Probability \& Its Applications} {\bf 46}(3), 406--419.

\bibitem{Chow} Chow, P.~L., Jiang, J.~L. (1992):
  Almost sure convergence of stochastic integrals in Hilbert spaces. \textit{Stochastic Analysis and Applications} {\bf 10}(5), 533--543.  
  
\bibitem{Criens} Criens, D. (2021):
  A dual Yamada-Watanabe theorem for L\'{e}vy driven stochastic differential equations.
  \textit{Electronic Communications in Probability} {\bf 26}(18), 1--10.
  
\bibitem{Criens-Ritter} Criens, D., Ritter, M. (2021):
  On a theorem by A.S. Cherny for semilinear stochastic partial differential equations.
  arXiv: 2012.14262v2.   
  
\bibitem{Da_Prato} Da~Prato, G., Zabczyk, J. (2014):
  \textit{Stochastic equations in infinite dimensions.} Second Edition. Cambridge University Press, Cambridge. 

\bibitem{Davies} Davies, E.~B. (1976):
  \textit{Quantum theory of open systems.} Academic Press, London.

\bibitem{SPDE} Filipovi\'c, D., Tappe, S., Teichmann, J. (2010):
  Jump-diffusions in Hilbert spaces: Existence, stability and numerics.
  \textit{Stochastics} {\bf 82}(5), 475--520.

\bibitem{Atma-book} Gawarecki, L., Mandrekar, V. (2011):
  \textit{Stochastic differential equations in infinite dimensions with applications to SPDEs.} Springer, Berlin.

\bibitem{Kurtz-1} Kurtz, T.~G. (2007):
  The Yamada-Watanabe-Engelbert theorem for general stochastic equations and inequalities.
  \textit{Electronic Journal of Probability} {\bf 12}(33), 951--965.  
  
\bibitem{Kurtz-2} Kurtz, T.~G. (2014):
  Weak and strong solutions of general stochastic models.
  \textit{Electronic Communications in Probability} {\bf 19}(58), 1--16.    
  
\bibitem{Liu-Roeckner} Liu, W., R\"{o}ckner, M. (2015): 
\textit{Stochastic partial differential equations: An introduction.} Springer, Heidelberg.  
  
\bibitem{Ondrejat} Ondrej\'{a}t, M. (2004): Uniqueness for stochastic evolution equations in Banach spaces.
  \emph{Dissertationes Mathematicae} {\bf 426}, 1--63.

\bibitem{Pazy} Pazy, A. (1983):
  \textit{Semigroups of linear operators and applications to partial differential equations.} Springer, New York.
  
\bibitem{Prevot-Roeckner} Pr\'{e}v\^{o}t, C., R\"{o}ckner, M. (2007): 
\textit{A concise course on stochastic partial differential equations.} Springer, Berlin.

\bibitem{Qiao} Qiao, H. (2010):
  A theorem dual to Yamada-Watanabe theorem for stochastic evolution equations.
  \textit{Stochastics and Dynamics} {\bf 10}(3), 367--374.

\bibitem{Rehmeier} Rehmeier, M. (2021):
  On Cherny's results in infinite dimensions: a theorem dual to Yamada-Watanabe.
  \textit{Stochastics and Partial Differential Equations: Analysis and Computations} {\bf 9}(1), 33--70.
  
\bibitem{Riedle} Riedle, M. (2011):
  Cylindrical Wiener processes. S\'eminaire de Probabilit\'es, XLIII, Lecture Notes in Mathematics, vol. 2006, Springer, Berlin, pp. 191--214.  
  
\bibitem{Roeckner} R\"{o}ckner, M., Schmuland, B., Zhang, X. (2008):
  Yamada-Watanabe theorem for stochastic evolution equations in infinite dimensions.
  \textit{Condensed Matter Physics} {\bf 11}(2), 247--259.

\bibitem{Nagy} Sz.-Nagy, B., Foias, C., Bercovici, H., K\'{e}rchy, L. (2010): Harmonic analysis of operators on Hilbert space. Revised and Enlarged Edition. Springer, New York.  
  
\bibitem{Tappe-refine} Tappe, S. (2012): Some refinements of existence results for SPDEs driven by Wiener processes and Poisson random measures. \textit{International Journal of Stochastic Analysis}, vol. {\bf 2012}, Article ID 236327, 24~pages.
  
\bibitem{Tappe-YW} Tappe, S. (2013):
  The Yamada-Watanabe theorem for mild solutions to stochastic partial differential equations. \textit{Electronic Communications in Probability} {\bf 18}(24), 1--13.

\bibitem{Tappe-mon} Tappe, S. (2021):
  Mild solutions to semilinear stochastic partial differential equations with locally monotone coefficients. To appear in \textit{Theory of Probability and Mathematical Statistics}. (arXiv: 2104.10711v1)
  
\bibitem{Yamada} Yamada, T., Watanabe, S. (1971):
  On the uniqueness of solutions of stochastic differential equations.
  \textit{J. Math. Kyoto Univ.} {\bf 11}(1), 155--167.

\bibitem{Zhao} Zhao, H. (2014):
  Yamada-Watanabe theorem for stochastic evolution equation driven by Poisson random measure. \textit{ISRN Probability and Statistics}, vol. 2014, Article ID 982190, 7 pages.
  
\bibitem{YHX} Zhao, H., Hu, C., Xu, S. (2016):
  Equivalence of uniqueness in law and joint uniqueness in law for SDEs driven by Poisson processes.
  \textit{Applied Mathematics} {\bf 7}, 784--792.

\end{thebibliography}
\end{document}